\newcommand{\ent}{\mathrm{ent}}
\renewcommand{\dim}{\mathrm{dim}}
\newcommand{\effdim}{\mathrm{effdim}}
\newcommand{\res}{{\upharpoonright}}
\newcommand{\cat}{{^{\smallfrown}}}
\newcommand{\llb}{\llbracket}
\newcommand{\rrb}{\rrbracket}
\newcommand{\diam}{\mathrm{diam}}
\newcommand{\dom}{\mathrm{dom}}
\newcommand{\rng}{\mathrm{rng}}
\newcommand{\ZZ}{\mathbb{Z}}
\newcommand{\NN}{\mathbb{N}}
\newcommand{\K}{\mathrm{K}}
\newcommand{\KP}{\mathrm{KP}}
\newcommand{\KS}{\mathrm{KS}}
\newcommand{\KM}{\mathrm{KM}}
\newcommand{\KA}{\mathrm{KA}}
\newcommand{\KD}{\mathrm{KD}}
\newcommand{\llex}{<_{\mathrm{lex}}}
\newcommand{\calE}{\mathcal{E}}
\newcommand{\calF}{\mathcal{F}}
\newcommand{\calP}{\mathcal{P}}
\newcommand{\calQ}{\mathcal{Q}}
\newcommand{\calU}{\mathcal{U}}
\newcommand{\calV}{\mathcal{V}}
\theoremstyle{definition}
\newtheorem{thm}{Theorem}[section]
\newtheorem{lem}[thm]{Lemma}
\newtheorem{rem}[thm]{Remark}
\newtheorem{ques}[thm]{Questions}
\begin{document}

\title{Symbolic dynamics:\\
  entropy = dimension = complexity}

\author{Stephen G. Simpson\\
  Department of Mathematics\\
  Vanderbilt University\\
  http://www.math.psu.edu/simpson\\
  sgslogic@gmail.com}

\date{First draft: March 17, 2010\\
  This draft: \today}

\maketitle

\addcontentsline{toc}{section}{Abstract}

\begin{abstract}
  Let $d$ be a positive integer.  Let $G$ be the additive monoid
  $\NN^d$ or the additive group $\ZZ^d$.  Let $A$ be a finite set of
  symbols.  The shift action of $G$ on $A^G$ is given by
  $S^g(x)(h)=x(g+h)$ for all $g,h\in G$ and all $x\in A^G$.  A
  $G$-subshift is defined to be a nonempty closed set $X\subseteq A^G$
  such that $S^g(x)\in X$ for all $g\in G$ and all $x\in X$.  Given a
  $G$-subshift $X$, the topological entropy $\ent(X)$ is defined as
  usual \cite{ruelle-Zd}.  The standard metric on $A^G$ is defined by
  $\rho(x,y)=2^{-|F_n|}$ where $n$ is as large as possible such that
  $x\res F_n=y\res F_n$.  Here $F_n=\{0,1,\ldots,n\}^d$ if $G=\NN^d$,
  and $F_n=\{-n,\ldots,-1,0,1,\ldots,n\}^d$ if $G=\ZZ^d$.  For any
  $X\subseteq A^G$ the Hausdorff dimension $\dim(X)$ and the effective
  Hausdorff dimension $\effdim(X)$ are defined as usual
  \nocite{lc2001}\cite{hausdorff-dim,reimann-phd,re-st-effdim} with
  respect to the standard metric.  It is well known that
  $\effdim(X)=\sup_{x\in X}\liminf_n\K(x\res F_n)/|F_n|$ where $\K$
  denotes Kolmogorov complexity \cite{do-hi-book}.  If $X$ is a
  $G$-subshift, we prove that $\ent(X)=\dim(X)=\effdim(X)$, and
  $\ent(X)\ge\limsup_n\K(x\res F_n)/|F_n|$ for all $x\in X$, and
  $\ent(X)=\lim_n\K(x\res F_n)/|F_n|$ for some $x\in X$.
\end{abstract}

\vfill

{\small

  \noindent Keywords: symbolic dynamics, entropy, Hausdorff dimension,
  Kolmogorov complexity.

  \medskip

  \noindent MSC2010 Subject Classification: Primary 37B10, Secondary
  37B40, 94A17, 68Q30.

  \medskip

  \noindent We thank the anonymous referees for comments which led to
  improvements in this paper.  In particular, the proof of Lemma
  \ref{lem:limsup} given below was suggested by the first referee and
  is much simpler than our original proof.

  \medskip

  \noindent This paper has been published in \emph{Theory of
    Computing Systems}, 56(3):2015, 527--543.}

\newpage

{\small
\contentsline {section}{Abstract}{1}
\contentsline {section}{\numberline {1}Introduction}{2}
\contentsline {section}{\numberline {2}How this paper came about}{3}
\contentsline {section}{\numberline {3}Background}{4}
\contentsline {subsection}{\numberline {3.1}Topological entropy}{5}
\contentsline {subsection}{\numberline {3.2}Symbolic dynamics}{7}
\contentsline {subsection}{\numberline {3.3}Hausdorff dimension}{8}
\contentsline {subsection}{\numberline {3.4}Kolmogorov complexity}{8}
\contentsline {subsection}{\numberline {3.5}Effective Hausdorff dimension}{9}
\contentsline {subsection}{\numberline {3.6}Measure-theoretic entropy}{10}
\contentsline {section}{\numberline {4}Entropy = dimension}{12}
\contentsline {section}{\numberline {5}Dimension = complexity}{14}
\contentsline {section}{References}{16}
}

\section{Introduction}
\label{sec:intro}

The purpose of this paper is to elucidate a close relationship among
three disparate concepts which are known to play a large role in three
diverse branches of contemporary mathematics.  The concepts are:
\begin{center}
  entropy,\ \ \ \ Hausdorff dimension,\ \ \ \ Kolmogorov complexity.
\end{center}
Some relationships among these concepts are well known; see for
instance \cite{brudno,pesin-dim,brudno-white}.  Nevertheless, it seems
to us that the full depth of the relationships has been insufficiently
appreciated.  Below we prove that, in an important special case, all
three concepts coincide.

Here is a brief overview of the above-mentioned concepts.

\begin{enumerate}
\item \emph{Hausdorff dimension} is a basic concept in metric
  geometry.  See for instance the original paper by Hausdorff
  \cite{hausdorff-dim} and the classic treatise by C. A. Rogers
  \cite{c-a-rogers}.  To any set $X$ in a metric space one assigns a
  nonnegative real number $\dim(X)=$ the Hausdorff dimension of $X$.
  In the case of smooth sets such as algebraic curves and surfaces,
  the Hausdorff dimension is a nonnegative integer and coincides with
  other familiar notions of dimension from algebra, differential
  geometry, etc.  For example, the Hausdorff dimension of a smooth
  surface in $n$-dimensional space is $2$.  On the other hand,
  Hausdorff dimension applies also to non-smooth sets with nonintegral
  dimension, e.g., fractals and Julia sets \cite{falconer}.
\item \emph{Kolmogorov complexity} plays an important role in
  information theory \cite{cover-thomas,shannon-weaver}, theoretical
  computer science \cite{li-vitanyi,us-sh}, and
  recursion/computability theory \cite{do-hi-book,nies-book}.  To each
  finite mathematical object $\tau$ one assigns a nonnegative integer
  $\K(\tau)=$ the complexity of $\tau$.  Roughly speaking, $\K(\tau)$
  is the length in bits of the shortest computer program which
  describes $\tau$.  In this sense $\K(\tau)$ measures the ``amount of
  information'' which is inherent in $\tau$.
\item \emph{Entropy} is an important concept in dynamical systems
  theory \cite{de-gr-si}.  Classically, a dynamical system consists of
  a set $X$ together with a mapping $T:X\to X$ and one studies the
  long-term behavior of the orbits $\langle T^n(x)\mid
  n=0,1,2,\ldots\rangle$ for each $x\in X$.  More generally, one
  considers an action $T$ of a group or semigroup $G$ on a set $X$,
  and then the orbit of $x\in X$ is $\langle T^g(x)\mid g\in
  G\rangle$.  The entropy of the system $X,T$ is a nonnegative real
  number which has a rather complicated definition but is intended to
  quantify the ``exponential growth rate'' of the system.

  An especially useful class of dynamical systems are the symbolic
  systems, a.k.a., subshifts
  \cite{ho-me-tilings,lind-marcus,shields-erg,2dim}.  Given a finite
  set of symbols $A$, one defines the shift action of $G$ on $A^G$ as
  usual.  A subshift is then defined to be a closed, shift-invariant
  subset of $A^G$.  These symbolic systems play a large role in
  general dynamical systems theory, because for any dynamical system
  $X,T$ one can consider partitions $\pi:X\to A$ and then the behavior
  of an orbit $\langle T^g(x)\mid g\in G\rangle$ is reflected by the
  behavior of its ``symbolic trace,'' $\langle\pi(T^g(x))\mid g\in
  G\rangle$, which is a point in $A^G$.
\end{enumerate}

Our main results in this paper are Theorems \ref{thm:G} and
\ref{thm:dim-k} below.  They say the following.  Let $d$ be a positive
integer, let $G$ be the additive monoid $\NN^d$ or the additive group
$\ZZ^d$, let $A$ be a finite set of symbols, and let $X\subseteq A^G$
be a subshift.  Then, the entropy of $X$ is equal to the Hausdorff
dimension of $X$ with respect to the standard metric on $A^G$.
Moreover, the entropy of $X$ has a sharp characterization in terms of
the Kolmogorov complexity of the finite configurations which occur in
the orbits of $X$.

In connection with the characterization of entropy in terms of
Kolmogorov complexity, it is interesting to note that both of these
concepts originated with A. N. Kolmogorov, but in different contexts
\cite{ks-entropy,k-original}.

\section{How this paper came about}
\label{sec:ack}

This paper is an outcome of my reading and collaboration over the past
several years.  Here are some personal comments on that process.

It began with my study of Bowen's alternative definition of
topological entropy \cite[pages 125--126]{bowen-tams-73}.  Obviously
Bowen's definition resembles the standard definition of Hausdorff
dimension in a metric space, and this led me to consider the following
question:
\begin{quote}
  Given a subshift $X$, what is the precise relationship between the
  topological entropy of $X$ and the Hausdorff dimension of $X$?
\end{quote}
Specifically, let $A$ be a finite set of symbols.  From
\cite[Proposition 1]{bowen-tams-73} it was clear to me that the
topological entropy of a one-sided subshift $X\subseteq A^\NN$ is
equal to the Hausdorff dimension of $X$ with respect to the standard
metric.  And eventually I learned that this result appears explicitly
in Furstenberg 1967 \cite[Proposition III.1]{furst-tcs-67}.  But what
about other kinds of subshifts on $A$?  For instance, what about the
two-sided case, i.e., subshifts in $A^\ZZ$ or $A^{\ZZ^d}$ or more
generally $A^G$ where $G$ is a countable amenable group
\nocite{lmsln310}\cite{amenable-survey}?  And what about the general
one-sided case, i.e., subshifts in $A^{\NN^d}$ or more generally $A^G$
where $G$ is countable amenable semigroup, whatever that may mean?

During February, March and April of 2010 I discussed these issues with
several colleagues: John Clemens, Vaughn Climenhaga \cite[Example
4.1]{climenhaga-10}, Manfred Denker \cite{de-gr-si}, Michael Hochman
\cite{ho-me-tilings}, Anatole Katok \cite{katok-thouvenot}, Daniel
Mauldin, Yakov Pesin \cite{pesin-dim}, Jan Reimann \cite{reimann-phd},
Alexander Shen \cite{us-sh}, Daniel Thompson, Jean-Paul Thouvenot
\cite{katok-thouvenot}.  All of these discussions were extremely
helpful.  In particular, Hochman and Mauldin provided several ideas
which play an essential role in this paper.

\section{Background}
\label{sec:background}

In this section we present some background material concerning
symbolic dynamics, entropy, Hausdorff dimension, and Kolmogorov
complexity.  All of the concepts and results in this section are well
known.

We write
\begin{center}
  $\NN=\{0,1,2,\ldots\}=\{$the nonnegative integers$\}$
\end{center}
and
\begin{center}
  $\ZZ=\{\ldots,-2,-1,0,1,2,\ldots\}=\{$the integers$\}$.
\end{center}
Throughout this paper, let $G$ be the additive monoid $\NN^d$ or the
additive group $\ZZ^d$ where $d$ is a fixed positive integer.  An
\emph{action} of $G$ on a set $X$ is a mapping $T:G\times X\to X$ such
that $T^e(x)=x$ and $T^g(T^h(x))=T^{g+h}(x)$ for all $g,h\in G$ and
all $x\in X$.  Here $e$ is the identity element of $G$.  It is useful
to write $G$ in a specific\footnote{In particular, the sequence $F_n$
  with $n=0,1,2,\ldots$ is a F{\o}lner sequence for $G$.} way as the
union of a sequence of finite sets, namely $G=\bigcup_{n=0}^\infty
F_n$ where $F_n=\{0,1,\ldots,n\}^d$ if $G=\NN^d$, and
$F_n=\{-n,\ldots,-1,0,1,\ldots,n\}^d$ if $G=\ZZ^d$.  In particular we
have $F_0=\{0\}^d=\{e\}$.  We also write $F_{-1}=\emptyset=$ the empty
set.  For any finite set $F$ we write $|F|=$ the cardinality of $F$.
For any function $\Phi$ we write $\dom(\Phi)=$ the domain of $\Phi$,
and $\rng(\Phi)=$ the range of $\Phi$, and
\begin{center}
  $\Phi:\,\subseteq X\to Y$
\end{center}
meaning that $\Phi$ is a function with $\dom(\Phi)\subseteq X$ and
$\rng(\Phi)\subseteq Y$.  Apart from this, all of our set-theoretic
notation is standard.

\subsection{Topological entropy}
\label{sec:ent-t}

We endow $G$ with the discrete topology.  Let $X$ be a nonempty
compact set in a topological space, and let $T:G\times X\to X$ be a
continuous action of $G$ on $X$.  The ordered pair $X,T$ is called a
\emph{compact dynamical system}.  We now define the topological
entropy of $X,T$.

An \emph{open cover of $X$} is a set $\calU$ of open sets such that
$X\subseteq\bigcup\calU$.  In this case we write
\begin{center}
  $C(X,\calU)=\min\{|\calF|\mid\calF\subseteq\calU,X\subseteq\bigcup\calF\}$.
\end{center}
Note that $C(X,\calU)$ is a positive integer.  If $\calU$ and $\calV$
are open covers of $X$, then
\begin{center}
  $\sup(\calU,\calV)=\{U\cap V\mid U\in\calU,V\in\calV\}$
\end{center}
is again an open cover of $X$, and
\begin{center}
  $C(X,\sup(\calU,\calV))\le C(X,\calU)C(X,\calV)$.
\end{center}
For each $g\in G$ and each open cover $\calU$ of $X$, we have another
open cover $\calU^g=(T^g)^{-1}(\calU)=\{(T^g)^{-1}(U)\mid
U\in\calU\}$.  Hence, for each finite set $F\subset G$ we have an open
cover $\calU^F=\sup\{\calU^g\mid g\in F\}$.  Let us write
$C(X,T,\calU,F)=C(X,\calU^F)$.  Note that $C(X,\calU^g)\le
C(X,\calU)$, hence $C(X,\calU^F)\le C(X,\calU)^{|F|}$, hence
$\log_2C(X,T,\calU,F)\le|F|\log_2C(X,\calU)$.  We define
\begin{equation}
  \label{eq:entXTU}
  \ent(X,T,\calU)=\lim_{n\to\infty}\frac{\log_2C(X,T,\calU,F_n)}{|F_n|}
\end{equation}
and
\[
\ent(X,T)=\sup\{\ent(X,T,\calU)\mid\calU\hbox{ is an open cover of }X\}\,.
\]
The nonnegative real number $\ent(X,T)$ is known as the
\emph{topological entropy}\footnote{Instead of $\log_2$ we could use
  $\log_b$ for any fixed $b>1$, for instance $b=e$ or $b=10$.  The
  base $b=2$ is convenient for information theory, where entropy is
  measured in bits.} of $X,T$.  It measures what might be called the
``asymptotic exponential growth rate'' of $X,T$.  See for instance
\cite{de-gr-si,mis-Nd,ruelle-Zd}.

\begin{lem}
  The limit in equation (\ref{eq:entXTU}) exists.
\end{lem}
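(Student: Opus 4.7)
The plan is to apply a multidimensional subadditivity (Fekete-type) argument. Write $a(F) = \log_2 C(X,T,\calU,F)$ for any finite $F\subseteq G$; I want to show that $a(F_n)/|F_n|$ converges.

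First I would establish two structural properties of $a$ already implicit in the set-up: \textbf{(a) subadditivity}, $a(F_1\cup F_2)\le a(F_1)+a(F_2)$, and \textbf{(b) translation monotonicity}, $a(F+g)\le a(F)$ for every $g\in G$. Property (a) is immediate from $\calU^{F_1\cup F_2}=\sup(\calU^{F_1},\calU^{F_2})$ together with the inequality $C(X,\sup(\calU,\calV))\le C(X,\calU)\,C(X,\calV)$ noted in the text. Property (b) follows because $\calU^{F+g}=(T^g)^{-1}(\calU^F)$; given a minimal subcover $\calF\subseteq\calU^F$ of $X$, the pullback $(T^g)^{-1}(\calF)$ covers $(T^g)^{-1}(X)\supseteq X$ (using $T^g(X)\subseteq X$) and has the same cardinality, so $C(X,\calU^{F+g})\le C(X,\calU^F)$.

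Next, specialize to $G=\NN^d$ (the $\ZZ^d$ case is entirely analogous with side lengths $2n+1$ in place of $n+1$). Fix $n$ and take $N$ of the special form $N+1=(k+1)(n+1)$. Then $F_N$ decomposes disjointly as
\[
F_N=\bigcup_{j\in\{0,1,\ldots,k\}^d}\bigl(F_n+(n+1)j\bigr),
\]
so (a) and (b) give $a(F_N)\le(k+1)^d a(F_n)$, hence $a(F_N)/|F_N|\le a(F_n)/|F_n|$. For an arbitrary $N$, write $N+1=q(n+1)+r$ with $0\le r\le n$, tile the sub-box $\{0,\ldots,q(n+1)-1\}^d$ by $q^d$ translates of $F_n$, and cover the remaining $|F_N|-q^d(n+1)^d=O(N^{d-1})$ points by singletons. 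Since (a) and (b) give $a(\{g\})\le a(\{0\})=\log_2 C(X,\calU)$ for every $g$, this yields
\[
\frac{a(F_N)}{|F_N|}\le\frac{q^d\,a(F_n)}{(N+1)^d}+\left(1-\frac{q^d(n+1)^d}{(N+1)^d}\right)\log_2 C(X,\calU).
\]
Letting $N\to\infty$ with $n$ fixed makes the second term vanish and $q^d(n+1)^d/(N+1)^d\to 1$, so $\limsup_{N}a(F_N)/|F_N|\le a(F_n)/|F_n|$. Taking the infimum over $n$ gives $\limsup_N a(F_N)/|F_N|\le\inf_n a(F_n)/|F_n|\le\liminf_n a(F_n)/|F_n|$, which forces the limit to exist and equal $\inf_n a(F_n)/|F_n|$.

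The only real obstacle is the non-tileable remainder that appears when $N+1$ is not a multiple of $n+1$; this is controlled by the explicit $\log_2 C(X,\calU)$ bound on singletons together with the fact that the remainder has $o(|F_N|)$ many points, which is a manifestation of the F\o lner property of the sequence $F_n$ pointed out in the footnote.
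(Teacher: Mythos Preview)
Your argument is correct and follows essentially the same Fekete-type subadditivity strategy as the paper: tile $F_N$ by translates of $F_m$ and pass to the limit. The only difference is in handling the non-tileable part---the paper uses the monotonicity $C_m\le C_n$ to enlarge $F_n$ to the next exact multiple of $F_m$ (picking up a correction factor $((2m+1)/2m)^d\to1$), whereas you shrink to a tiled sub-box and cover the leftover boundary by singletons, which as a bonus shows directly that the limit equals $\inf_n a(F_n)/|F_n|$.
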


\begin{proof}
  Let us write $C_n=C(X,T,\calU,F_n)$.  Clearly $C_m\le C_n$ whenever
  $m\le n$.  Moreover, it is easy to see that $C_{nk}\le C_n^{k^d}$
  for all positive integers $k$.  We are trying to prove that
  $\log_2C_n/|F_n|$ approaches a limit as $n\to\infty$.  Assume
  $G=\ZZ^d$, so that $|F_n|=(2n+1)^d$.  (The case $G=\NN^d$ is
  similar, with $|F_n|=(n+1)^d$.)

  Fix a positive integer $m$.  Given $n\ge m$, let $k$ be a positive
  integer such that $mk\le n<m(k+1)$.  We have $|F_n|\ge|F_{mk}|$ and
  \[
  \frac{|F_{mk}|}{k^d|F_m|}=\left(\frac{2mk+1}{2mk+k}\right)^d
  >\left(\frac{2m}{2m+1}\right)^d
  \]
  and $\log_2C_n\le\log_2C_{m(k+1)}\le(k+1)^d\log_2C_m$, hence
  \[
  \frac{\log_2C_n}{|F_n|}\le\frac{(k+1)^d\log_2C_m}{|F_{mk}|}
  \le\frac{(k+1)^d\log_2C_m}{k^d|F_m|}\left(\frac{2m+1}{2m}\right)^d.
  \]
  As $n\to\infty$ we have $k\to\infty$, hence
  \[
  \limsup_{n\to\infty}\frac{\log_2C_n}{|F_n|}
  \le\frac{\log_2C_m}{|F_m|}\left(\frac{2m+1}{2m}\right)^d,
  \]
  and this holds for all $m$, hence
  \[
  \limsup_{n\to\infty}\frac{\log_2C_n}{|F_n|}
  \le\liminf_{m\to\infty}\frac{\log_2C_m}{|F_m|}\,.
  \]
  In other words, $\lim_{n\to\infty}\log_2C_n/|F_n|$ exists, Q.E.D.
\end{proof}

Let $\calU$ and $\calV$ be open covers of $X$.  We say that $\calU$
\emph{refines} $\calV$ if each $U\in\calU$ is included in some
$V\in\calV$.  Obviously this implies $C(X,\calU)\ge C(X,\calV)$, and
it is also easy to see that $\ent(X,T,\calU)\ge\ent(X,T,\calV)$.

\begin{lem}
  \label{lem:entUF}
  For each $m$ we have $\ent(X,T,\calU^{F_m})=\ent(X,T,\calU)$.
\end{lem}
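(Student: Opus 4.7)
The plan is to unwind the definition of $\ent(X,T,\calU^{F_m})$, identify the relevant open covers explicitly, and then reduce the problem to a limit comparison using the previous lemma.

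First I would verify the cover-theoretic identity
\[
C(X,T,\calU^{F_m},F_n)=C(X,T,\calU,F_{m+n})
\]
by showing that the two open covers $(\calU^{F_m})^{F_n}$ and $\calU^{F_m+F_n}$ mutually refine one another. Unwinding definitions, a typical element of $(\calU^{F_m})^{F_n}$ has the form $\bigcap_{g\in F_n}\bigcap_{h\in F_m}(T^{g+h})^{-1}(V^{g}_h)$ with $V^g_h\in\calU$, while a typical element of $\calU^{F_m+F_n}$ has the form $\bigcap_{k\in F_m+F_n}(T^k)^{-1}(W_k)$ with $W_k\in\calU$. Taking $V^g_h=W_{g+h}$ exhibits every element of $\calU^{F_m+F_n}$ as an element of $(\calU^{F_m})^{F_n}$, so the latter refines the former trivially. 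Conversely, for each $k\in F_m+F_n$ pick any decomposition $k=g(k)+h(k)$ with $g(k)\in F_n$ and $h(k)\in F_m$; then an arbitrary element of $(\calU^{F_m})^{F_n}$ is contained in $\bigcap_{k}(T^k)^{-1}(V^{g(k)}_{h(k)})\in\calU^{F_m+F_n}$, so $\calU^{F_m+F_n}$ refines $(\calU^{F_m})^{F_n}$. Mutual refinement forces equality of the $C$-values, by the observation already noted in the excerpt that refinement yields the inequality $C(X,\calU)\ge C(X,\calV)$.

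Next I would use the elementary identity $F_m+F_n=F_{m+n}$, which holds on the nose in both cases: for $G=\NN^d$ we have $\{0,\dots,m\}^d+\{0,\dots,n\}^d=\{0,\dots,m+n\}^d$, and for $G=\ZZ^d$ we have $\{-m,\dots,m\}^d+\{-n,\dots,n\}^d=\{-(m+n),\dots,m+n\}^d$. Combined with the previous step this gives $C(X,T,\calU^{F_m},F_n)=C(X,T,\calU,F_{m+n})$.

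Finally I would pass to the limit by writing
\[
\frac{\log_2 C(X,T,\calU^{F_m},F_n)}{|F_n|}
=\frac{\log_2 C(X,T,\calU,F_{m+n})}{|F_{m+n}|}\cdot\frac{|F_{m+n}|}{|F_n|}.
\]
By the preceding lemma the first factor converges to $\ent(X,T,\calU)$ as $n\to\infty$. The second factor equals $((2(m+n)+1)/(2n+1))^d$ in the $\ZZ^d$ case and $((m+n+1)/(n+1))^d$ in the $\NN^d$ case, both of which tend to $1$ as $n\to\infty$ with $m$ held fixed. Hence $\ent(X,T,\calU^{F_m})=\ent(X,T,\calU)$.

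I don't anticipate a real obstacle here; the only mildly delicate point is that $(\calU^{F_m})^{F_n}$ and $\calU^{F_m+F_n}$ are not literally equal as sets of open sets, which is why the argument must be phrased in terms of mutual refinement rather than equality of covers.
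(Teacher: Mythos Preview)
Your argument is correct and follows essentially the paper's approach: both use $F_m+F_n=F_{m+n}$, a refinement comparison between $(\calU^{F_m})^{F_n}$ and $\calU^{F_{m+n}}$, and the limit $|F_{m+n}|/|F_n|\to1$. One cosmetic slip: in both refinement steps you state the direction backwards. The inclusion $\calU^{F_{m+n}}\subseteq(\calU^{F_m})^{F_n}$ shows that $\calU^{F_{m+n}}$ refines $(\calU^{F_m})^{F_n}$ (each member of the smaller collection is contained in itself, a member of the larger), not the reverse; and your ``conversely'' step shows that $(\calU^{F_m})^{F_n}$ refines $\calU^{F_{m+n}}$, not the reverse. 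Since your goal is mutual refinement, the two swaps cancel and the conclusion $C(X,T,\calU^{F_m},F_n)=C(X,T,\calU,F_{m+n})$ is unaffected.

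The paper, incidentally, does slightly less work here: it uses only the one-sided inequality $C(X,T,\calU,F_{m+n})\ge C(X,T,\calU^{F_m},F_n)$ (from the inclusion $\calU^{F_{m+n}}\subseteq(\calU^{F_m})^{F_n}$) together with the same limit to obtain $\ent(X,T,\calU)\ge\ent(X,T,\calU^{F_m})$, and gets the reverse inequality directly from the observation that $\calU^{F_m}$ refines $\calU$.
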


\begin{proof}
  Clearly $\calU^{F_m}$ refines $\calU$, hence
  $\ent(X,T,\calU^{F_m})\ge\ent(X,T,\calU)$.  For all $n$ we have
  $F_{m+n}=F_m+F_n$, hence
  $\calU^{F_{m+n}}\subseteq(\calU^{F_m})^{F_n}$, hence
  $\calU^{F_{m+n}}$ refines $(\calU^{F_m})^{F_n}$, hence
  $C(X,\calU^{F_{m+n}})\ge C(X,(\calU^{F_m})^{F_n})$, hence
  $C(X,T,\calU,F_{m+n})\ge C(X,T,\calU^{F_m},F_n)$, hence
  \[
  \frac{\log_2C(X,T,\calU,F_{m+n})}{|F_n|}
  \ge\frac{\log_2C(X,T,\calU^{F_m},F_n)}{|F_n|}\,,
  \]
  hence
  \[
  \frac{\log_2C(X,T,\calU,F_{m+n})}{|F_{m+n}|}\cdot\frac{|F_{m+n}|}{|F_n|}
  \ge\frac{\log_2C(X,T,\calU^{F_m},F_n)}{|F_n|}\,.
  \]
  Taking the limit as $n\to\infty$ and noting that
  \[
  \lim_{n\to\infty}\frac{|F_{m+n}|}{|F_n|}=1\,,
  \]
  we see that $\ent(X,T,\calU)\ge\ent(X,T,\calU^{F_m})$.  This
  completes the proof.
\end{proof}

Let $\calU$ be an open cover of $X$.  We say that $\calU$ is a
\emph{topological generator} if for each open cover $\calV$ of $X$
there exists $m$ such that $\calU^{F_m}$ refines $\calV$.  The
following theorem says that we can use a topological generator to
compute $\ent(X,T)$.

\begin{thm}
  \label{thm:topgen}
  If $\calU$ is a topological generator, then
  $\ent(X,T)=\ent(X,T,\calU)$.
\end{thm}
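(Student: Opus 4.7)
The plan is to combine the two ingredients already in the excerpt: the monotonicity of $\ent(X,T,\cdot)$ under refinement, and Lemma \ref{lem:entUF}, which says $\ent(X,T,\calU^{F_m})=\ent(X,T,\calU)$ for every $m$. The conclusion will then fall out immediately from the definition of topological generator.

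First I would establish the easy direction $\ent(X,T,\calU)\le\ent(X,T)$, which is simply the fact that $\ent(X,T)$ is defined as a supremum over all open covers of $X$, so in particular it dominates $\ent(X,T,\calU)$.

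For the nontrivial direction $\ent(X,T)\le\ent(X,T,\calU)$, I would fix an arbitrary open cover $\calV$ of $X$ and use the topological generator property to choose $m$ such that $\calU^{F_m}$ refines $\calV$. By the remark immediately preceding Lemma \ref{lem:entUF}, refinement implies
\[
\ent(X,T,\calU^{F_m})\ge\ent(X,T,\calV).
\]
Applying Lemma \ref{lem:entUF}, the left-hand side equals $\ent(X,T,\calU)$, so $\ent(X,T,\calV)\le\ent(X,T,\calU)$. Since $\calV$ was arbitrary, taking the supremum over open covers $\calV$ yields $\ent(X,T)\le\ent(X,T,\calU)$, completing the proof.

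There is no real obstacle here; the two directions are almost immediate given the prior lemma. The only subtlety worth emphasizing in the write-up is why Lemma \ref{lem:entUF} is precisely the right tool: without it, refining $\calV$ by $\calU^{F_m}$ would only give an upper bound on $\ent(X,T,\calV)$ in terms of $\ent(X,T,\calU^{F_m})$, which a priori could be larger than $\ent(X,T,\calU)$; the lemma closes this gap by asserting equality.
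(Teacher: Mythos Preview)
Your proposal is correct and follows exactly the same route as the paper: use the topological generator property to refine an arbitrary cover $\calV$ by some $\calU^{F_m}$, invoke monotonicity under refinement, and then apply Lemma~\ref{lem:entUF} to replace $\ent(X,T,\calU^{F_m})$ by $\ent(X,T,\calU)$. The paper's write-up is terser (it leaves the easy direction implicit), but the logical content is identical.
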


\begin{proof}
  Let $\calV$ be an open cover of $X$.  Since $\calU$ is a topological
  generator, let $m$ be such that $\calU^{F_m}$ refines $\calV$.  Then
  $\ent(X,T,\calU^{F_m})\ge\ent(X,T,\calV)$, so by Lemma
  \ref{lem:entUF} we have $\ent(X,T,\calU)\ge\ent(X,T,\calV)$.  Thus
  $\ent(X,T,\calU)=\ent(X,T)$.
\end{proof}

\subsection{Symbolic dynamics}
\label{sec:sd}

An important class of dynamical systems are the \emph{symbolic}
dynamical systems, also known as \emph{subshifts}.  We now present
some background material on subshifts.  See also
\cite[\S13.10]{lind-marcus} and
\nocite{cm-469}\cite{boyle-open,ho-me-tilings,shields-erg,2dim}.

As before, let $d$ be a positive integer, and let $G$ be the additive
monoid $\NN^d$ or the additive group $\ZZ^d$.  Let $A$ be a nonempty
finite set of symbols.  We endow $A$ with the discrete topology.  Let
$A^G=\{x\mid x:G\to A\}$.  We endow $A^G$ with the product topology.
Note that each $x\in A^G$ is a function from $G$ to $A$.  For each
finite set $F\subset G$ and each $x\in A^G$ let $x\res F$ be the
restriction of $x$ to $F$.  Thus $A^F=\{x\res F\mid x\in A^G\}$.  For
each $\sigma\in A^F$ we write $\dom(\sigma)=F$ and $|\sigma|=|F|$ and
$\llb\sigma\rrb=\{x\in A^G\mid x\res F=\sigma\}$.  Note that
$\llb\sigma\rrb$ is a nonempty clopen set in $A^G$, and
$\{\llb\sigma\rrb\mid\sigma\in A^F\}$ is a pairwise disjoint covering
of $A^G$.  Let $A^*=\bigcup_{n=0}^\infty A^{F_n}$ and note that
$\{\llb\sigma\rrb\mid\sigma\in A^*\}$ is a basis for the topology of
$A^G$.  For any $T\subseteq A^*$ we write $\llb
T\rrb=\bigcup_{\sigma\in T}\llb\sigma\rrb$.  Thus $\llb T\rrb$ is an
open set in $A^G$.

The \emph{shift action} of $G$ on $A^G$ is the mapping $S:G\times
A^G\to A^G$ given by $S^g(x)(h)=x(g+h)$ for all $g,h\in G$ and all
$x\in A^G$.  Thus $A^G,S$ is a compact dynamical system, known as the
\emph{full shift}.  Since $F_0=\{0\}^d$ is a singleton set, there is
an obvious one-to-one correspondence between $A^{F_0}$ and $A$, so we
identify $A^{F_0}$ with $A$.  The \emph{canonical open covering} of
$A^G$ is $\calU=\calU(A,G)=\{\llb a\rrb\mid a\in A\}$.  For each
finite set $F\subset G$ we have $\calU^F=\{\llb\sigma\rrb\mid\sigma\in
A^F\}$.  By compactness of $A^G$ it follows that $\calU$ is a
topological generator.  Moreover
$C(A^G,S,\calU,F)=C(A^G,\calU^F)=|\calU^F|=|A^F|=|A|^{|F|}$, so by
Theorem \ref{thm:topgen} we have
$\ent(A^G,S)=\ent(A^G,S,\calU)=\log_2|A|$.

A set $X\subseteq A^G$ is said to be \emph{shift-invariant} if
$S^g(x)\in X$ for all $g\in G$ and all $x\in X$.  A \emph{subshift} is
a nonempty, closed, shift-invariant subset of $A^G$.  Each subshift
$X\subseteq A^G$ gives rise to a compact dynamical system $X,S\res
G\times X$.  We write $\ent(X)=\ent(X,S\res G\times X)$, etc.  Since
$\calU^F$ is a pairwise disjoint covering of $A^G$, we have
$C(X,\calU,F)=C(X,\calU^F)=|X\res F|$ where $X\res F=\{x\res F\mid
x\in X\}$.  Since $\calU$ is a topological generator, it follows by
Theorem \ref{thm:topgen} that
\begin{equation}
  \label{eq:entX}
  \ent(X)=\lim_{n\to\infty}\frac{\log_2|X\res F_n|}{|F_n|}\,.
\end{equation}

\begin{lem}
  \label{lem:entX}
  We have
  \[
  \lim_{n\to\infty}|X\res F_n|2^{-s|F_n|}=\left\{
    \begin{array}{ll}
      0 & \hbox{ if }s>\ent(X)\,,\\[6pt]
      \infty & \hbox{ if }s<\ent(X)\,.
    \end{array}
    \right.
    \]
\end{lem}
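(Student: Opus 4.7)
The plan is to reduce the assertion to the already-established formula $\ent(X)=\lim_{n\to\infty}\log_2|X\res F_n|/|F_n|$ from equation (\ref{eq:entX}) by taking logarithms. Write
\[
\log_2\bigl(|X\res F_n|\,2^{-s|F_n|}\bigr)=|F_n|\left(\frac{\log_2|X\res F_n|}{|F_n|}-s\right),
\]
so the product $|X\res F_n|\,2^{-s|F_n|}$ behaves like $2^{|F_n|\,(\ent(X)-s+o(1))}$ as $n\to\infty$.

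First I would fix $s>\ent(X)$, set $\varepsilon=(s-\ent(X))/2>0$, and use (\ref{eq:entX}) to pick $N$ so large that $\log_2|X\res F_n|/|F_n|<\ent(X)+\varepsilon=s-\varepsilon$ for all $n\ge N$. Then the displayed identity gives $\log_2\bigl(|X\res F_n|\,2^{-s|F_n|}\bigr)\le-\varepsilon|F_n|$, and since $|F_n|\to\infty$ (as $|F_n|$ equals $(n+1)^d$ or $(2n+1)^d$), the quantity tends to $0$. The case $s<\ent(X)$ is symmetric: choose $\varepsilon=(\ent(X)-s)/2>0$, pick $N$ with $\log_2|X\res F_n|/|F_n|>s+\varepsilon$ for $n\ge N$, and conclude $\log_2\bigl(|X\res F_n|\,2^{-s|F_n|}\bigr)\ge\varepsilon|F_n|\to\infty$.

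There is essentially no obstacle here: the lemma is just the statement that a sequence of the form $2^{|F_n|(c_n-s)}$ with $c_n\to\ent(X)$ and $|F_n|\to\infty$ blows up or decays to zero depending on the sign of $\ent(X)-s$. The only thing one must verify is that $|F_n|\to\infty$, which is immediate from the definition of $F_n$. Note that the borderline case $s=\ent(X)$ is deliberately omitted from the statement, as the limit there can genuinely depend on the finer rate of convergence in (\ref{eq:entX}).
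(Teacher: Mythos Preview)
Your proof is correct and essentially identical to the paper's own argument: both take logarithms, use equation (\ref{eq:entX}) to choose an $\epsilon$ so that $\log_2|X\res F_n|/|F_n|$ is eventually below $s-\epsilon$ (resp.\ above $s+\epsilon$), and then conclude from $|F_n|\to\infty$. The only cosmetic difference is that you fix $\varepsilon=(s-\ent(X))/2$ explicitly while the paper just says ``fix $\epsilon>0$ such that $s-\epsilon>\ent(X)$.''
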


\begin{proof}
  First suppose $s>\ent(X)$.  Fix $\epsilon>0$ such that
  $s-\epsilon>\ent(X)$.  Equation (\ref{eq:entX}) implies that for all
  sufficiently large $n$ we have $(s-\epsilon)|F_n|>\log_2|X\res
  F_n|$, hence $|X\res F_n|2^{-|F_n|s}<2^{-\epsilon|F_n|}$.  Letting
  $n\to\infty$ we have $|F_n|\to\infty$, hence
  $\lim_{n\to\infty}|X\res F_n|2^{-s|F_n|}=0$.

  Next, suppose $s<\ent(X)$.  Fix $\epsilon>0$ such that
  $s+\epsilon<\ent(X)$.  Equation (\ref{eq:entX}) implies that for all
  sufficiently large $n$ we have $(s+\epsilon)|F_n|<\log_2|X\res
  F_n|$, hence $|X\res F_n|2^{-|F_n|s}>2^{\epsilon|F_n|}$.  Letting
  $n\to\infty$ we have $|F_n|\to\infty$, hence
  $\lim_{n\to\infty}|X\res F_n|2^{-s|F_n|}=\infty$.
\end{proof}

\subsection{Hausdorff dimension}
\label{sec:dim}

Let $X$ be a set in a metric space.  The \emph{$s$-dimensional
  Hausdorff measure} of $X$ is defined as
\[
\mu_s(X)\,\,\,=\,\,\,
\lim_{\epsilon\to0}\,\,\inf_\calE\,\,\sum_{E\in\calE}\diam(E)^s
\]
where $\diam(E)$ is the diameter of $E$.  Here $\calE$ ranges over
coverings of $X$ with the property that $\diam(E)\le\epsilon$ for all
$E\in\calE$.  The \emph{Hausdorff dimension} of $X$ is
\[
\dim(X)=\inf\{s\mid\mu_s(X)=0\}.
\]
Hausdorff measures and Hausdorff dimension have been widely studied,
e.g., in connection with the geometry of fractals
\cite{falconer,hausdorff-dim,c-a-rogers}.

We now define what we mean by the Hausdorff dimension of a subshift.
The \emph{standard metric} on $A^G$ is given by $\rho(x,y)=2^{-|F_n|}$
where $n=-1,0,1,2,\ldots$ is as large as possible such that $x\res
F_n=y\res F_n$.  (Recall that $F_{-1}=\emptyset$.)  Clearly the
standard metric on $A^G$ induces the product topology on $A^G$.
Moreover, the standard metric is an \emph{ultrametric}, i.e.,
$\rho(x,y)\le\max(\rho(x,z),\rho(y,z))$ for all $x,y,z$.  For any set
$X\subseteq A^G$ we define $\dim(X)=$ the Hausdorff dimension of $X$
with respect to the standard metric on $A^G$.

\begin{lem}
  \label{lem:easy}
  For all subshifts $X\subseteq A^G$ we have $\ent(X)\ge\dim(X)$.
\end{lem}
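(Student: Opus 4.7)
The plan is to exhibit, for each $n$, an economical covering of $X$ by cylinder sets and then invoke Lemma \ref{lem:entX} to kill the sum defining the $s$-dimensional Hausdorff measure whenever $s>\ent(X)$.

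More concretely, fix $s>\ent(X)$ and consider the covering
\[
\calE_n\,=\,\{\llb\sigma\rrb\cap X\mid\sigma\in X\res F_n\}
\]
of $X$. If $x,y$ both belong to $\llb\sigma\rrb$ for some $\sigma\in A^{F_n}$, then $x\res F_n=y\res F_n$, so by the definition of the standard metric $\rho(x,y)\le 2^{-|F_n|}$; hence $\diam(E)\le 2^{-|F_n|}$ for every $E\in\calE_n$. This immediately gives
\[
\sum_{E\in\calE_n}\diam(E)^s\,\le\,|X\res F_n|\cdot 2^{-s|F_n|}.
\]

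Since $|F_n|\to\infty$, the covering $\calE_n$ witnesses the $\epsilon$-constrained infimum in the definition of $\mu_s(X)$ for any prescribed $\epsilon>0$ once $n$ is large. Because $s>\ent(X)$, Lemma \ref{lem:entX} gives $|X\res F_n|\,2^{-s|F_n|}\to 0$, so $\mu_s(X)=0$. Therefore $\dim(X)\le s$, and since $s>\ent(X)$ was arbitrary, $\dim(X)\le\ent(X)$.

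I do not expect any serious obstacle here: the covering by cylinders is forced on us by the ultrametric, and the bound $|X\res F_n|\,2^{-s|F_n|}\to 0$ above the entropy is exactly the content of Lemma \ref{lem:entX}. The only small point to verify carefully is that the cylinder diameters really are bounded by $2^{-|F_n|}$, which is immediate from the definition of $\rho$.
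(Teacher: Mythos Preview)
Your argument is correct and is essentially the same as the paper's: both cover $X$ by the cylinders $\llb\sigma\rrb$ with $\sigma\in X\res F_n$, bound each diameter by $2^{-|F_n|}$, and invoke Lemma~\ref{lem:entX} to conclude $\mu_s(X)=0$ for every $s>\ent(X)$. The only cosmetic differences are that the paper writes the cover as $\{\llb x\res F_n\rrb\mid x\in X\}$ and records the exact equality $\diam(\llb\sigma\rrb)=2^{-|\sigma|}$, whereas you intersect with $X$ and use only the inequality, neither of which affects the proof.
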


\begin{proof}
  For each $E\subseteq A^G$ we have $\diam(E)\le2^{-|F_n|}$ if and
  only if $E\subseteq\llb\sigma\rrb$ for some $\sigma\in A^{F_n}$.
  Therefore, in the definition of $\mu_s(X)$ and $\dim(X)$ for an
  arbitrary set $X\subseteq A^G$, we may safely assume that each $E$
  is a basic open set, i.e., $E=\llb\sigma\rrb$ for some $\sigma\in
  A^*$.  Moreover, for each $\sigma\in A^*$ we have
  $\diam(\llb\sigma\rrb)=2^{-|\sigma|}$.

  Assume now that $X$ is a subshift, and suppose $s>\ent(X)$.  By
  Lemma \ref{lem:entX} we have
  \begin{equation}
    \label{eq:Fns0}
    \lim_{n\to\infty}|X\res F_n|2^{-|F_n|s}=0\,.
  \end{equation}
  But for each $n$ we have $X\subseteq\bigcup_{x\in X}\llb x\res
  F_n\rrb$ and $\diam(\llb x\res F_n\rrb)=2^{-|F_n|}$, so
  (\ref{eq:Fns0}) implies that $\mu_s(X)=0$, hence $s\ge\dim(X)$.
  Since this holds for all $s>\ent(X)$, it follows that
  $\ent(X)\ge\dim(X)$.
\end{proof}

\begin{rem}
  In \S\ref{sec:ent-dim} we shall prove that for all subshifts
  $X\subseteq A^G$, $\ent(X)=\dim(X)$.  In other words, the
  topological entropy of a subshift is equal to its Hausdorff
  dimension with respect to the standard metric.  While the special
  case $G=\NN$ is due to Furstenberg \cite[Proposition
  III.1]{furst-tcs-67}, the general result for $G=\NN^d$ or $G=\ZZ^d$
  appears to be new.
\end{rem}

\subsection{Kolmogorov complexity}
\label{sec:k}

We now present some background material on Kolmogorov complexity.

As in \S\ref{sec:sd} let $A^*=\bigcup_{n=0}^\infty A^{F_n}$.  In
addition let $\{0,1\}^*$ be the set of finite sequences of $0$'s and
$1$'s.  For each Turing machine $M$ and each finite sequence
$\alpha\in\{0,1\}^*$, let $M(\alpha)$ be the run of $M$ with input
$\alpha$.  A function
\begin{center}
  $\Phi:\,\subseteq\{0,1\}^*\to A^*$
\end{center}
is said to be \emph{partial computable} if there exists a Turing
machine $M$ such that for all $\alpha\in\{0,1\}^*$,
$\alpha\in\dom(\Phi)$ if and only if $M(\alpha)$ eventually halts, in
which case it halts with output $\Phi(\alpha)$.  For each such $\Phi$
and each $\xi\in A^*$ let
\begin{center}
  $\K_\Phi(\xi)=\min(\{|\alpha|\mid\Phi(\alpha)=\xi\}\cup\{\infty\})\,$.
\end{center}
A partial computable function $\Psi:\,\subseteq\{0,1\}^*\to A^*$ is
said to be \emph{universal} if for each partial computable function
$\Phi:\,\subseteq\{0,1\}^*\to A^*$ there exists a constant $c$ such
that for all $\xi\in A^*$ we have $\K_\Psi(\xi)\le\K_\Phi(\xi)+c$.
The existence of such a universal function is easily proved.  Fix such
a universal function $\Psi$.  For each $\xi\in A^*$ we define the
\emph{Kolmogorov complexity} of $\xi$ to be $\K(\xi)=\K_\Psi(\xi)$.
Note that $\K(\xi)$ is well defined up to an additive constant, i.e.,
up to $\pm\,O(1)$.  Here ``well defined'' means that $\K(\xi)$ is
independent of the choice of $\Psi$.

\begin{rem}
  Actually the complexity notion $\K$ defined above is only one of
  several variant notions, denoted in \cite{us-sh} as $\KP$, $\KS$,
  $\KM$, $\KA$, $\KD$.  These variants are useful in many contexts
  \cite{do-hi-book}.  However, for our purposes in this paper, the
  differences among them are immaterial.
\end{rem}

\subsection{Effective Hausdorff dimension}
\label{sec:effdim}

We now present some background material concerning the effective or
computable variant of Hausdorff dimension.  Throughout this paper the
words ``effective'' and ``computable'' refer to Turing's theory of
computability and unsolvability \cite{rogers,turing-36}.

A \emph{Polish space} is a complete separable metric space.  An
\emph{effectively presented Polish space} consists of a Polish space
$Z,\rho$ together with a mapping $\Phi:\NN\to Z$ such that
$\rng(\Phi)$ is dense in $Z,\rho$ and the real-valued function
$(m,n)\mapsto\rho(\Phi(m),\Phi(n)):\NN\times\NN\to[0,\infty)$ is
computable.  In this case we define the \emph{basic open sets} of
$Z,\rho,\Phi$ to be those of the form
\begin{center}
  $B(n,r)=\{x\in Z\mid\rho(\Phi(n),x)<r\}$
\end{center}
where $r$ is a positive rational number and $n\in\NN$.  A sequence of
basic open sets $B_i$, $i=1,2,\ldots$ is said to be \emph{computable}
if there exist computable sequences $n_i$, $r_i$, $i=1,2,\ldots$ such
that $B_i=B(n_i,r_i)$ for all $i$.  A set $X\subseteq Z$ is said to be
\emph{effectively closed} if its complement $Z\setminus X$ is
\emph{effectively open}, i.e., $Z\setminus X=\emptyset$ or $Z\setminus
X=\bigcup_{i=1}^\infty B_i$ where $B_i$, $i=1,2,\ldots$ is a
computable sequence of basic open sets.  We say that $X$ is
\emph{effectively compact} if it is effectively closed and
\emph{effectively totally bounded}, i.e., there exists a computable
function $f:\NN\to\NN$ such that
$X\subseteq\bigcup_{n=1}^{f(i)}B(n,2^{-i})$ for each $i$.

Let $s$ be a positive real number.  We say that $X$ is
\emph{effectively $s$-null} if there exists a computable double
sequence of basic open sets $B_{ij}$, $i,j=1,2,\ldots$, such that
$X\subseteq\bigcup_{j=1}^\infty B_{ij}$ and
$\sum_{j=1}^\infty\diam(B_{ij})^s\le2^{-i}$ for each $i$.  The
\emph{effective Hausdorff dimension of $X$} is defined as
\[
\effdim(X)\,=\,\inf\{s\mid X\hbox{ is effectively }s\hbox{-null}\}\,.
\]
Note that, although the Hausdorff dimension of a singleton point
$\{x\}$ is always 0, there may be no computable way to ``observe''
this, so the effective Hausdorff dimension of a noncomputable point
may be $>0$.  In fact, for any set $X$ one has
\begin{equation}
  \label{eq:effdim}
  \effdim(X)\,\,=\,\,\sup_{x\in X}\,\effdim(\{x\})\,.
\end{equation}
On the other hand, it is known that $\effdim(X)=\dim(X)$ provided $X$
is effectively compact.  See for instance \cite[Chapter 13]{do-hi-book} and
\nocite{lc2001,9asian}\cite{reimann-phd,re-st-effdim,re-st-tests}.

The above definitions and remarks apply to the effectively compact,
effectively presented\footnote{Our $\Phi$ for $A^G$ is obtained as
  follows.  Let $\#:A^*\to\NN$ be a \emph{standard G\"odel numbering}
  of $A^*$.  In other words, for each $\sigma\in A^*$ let $\#(\sigma)$
  be a numerical code for $\sigma$ from which $\sigma$ can be
  effectively recovered.  Let $a$ be a fixed symbol in $A$.  Define
  $\Phi:\NN\to A^G$ by letting $\Phi(\#(\sigma))=x_\sigma\in A^G$
  where $x_\sigma\in\llb\sigma\rrb$ and $x_\sigma(g)=a$ for all $g\in
  G\setminus\dom(\sigma)$.}  Polish space $A^G$ with the standard
metric as defined in \S\ref{sec:dim}.  In particular we have
$\effdim(X)=\dim(X)$ for all effectively closed sets $X\subseteq A^G$.
In \S\ref{sec:dim-k} below we shall prove that $\effdim(X)=\dim(X)$
for all subshifts $X\subseteq A^G$.  This result holds even if $X$ is
not effectively closed.

For arbitrary subsets of $A^G$, the following theorem exhibits a
relationship between effective Hausdorff dimension and Kolmogorov
complexity.  We shall see in Theorem \ref{thm:dim-k} that the
relationship is even closer when $X$ is a subshift.

\begin{thm}[Mayordomo's Theorem]
  \label{thm:effdim-k}
  For any set $X\subseteq A^G$ we have
  \[
  \effdim(X)\,\,=\,\,\sup_{x\in
    X}\,\liminf_{n\to\infty}\,\frac{\K(x\res F_n)}{|F_n|}\,.
  \]
\end{thm}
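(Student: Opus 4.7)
The plan is to establish the equality on singletons and invoke equation (\ref{eq:effdim}). Specifically, for each $x \in A^G$ I intend to show
$$\effdim(\{x\})\,\,=\,\,\liminf_{n \to \infty}\,\frac{\K(x\res F_n)}{|F_n|},$$
whence taking the supremum over $x \in X$ and applying (\ref{eq:effdim}) yields the theorem. Throughout I would exploit the fact that the standard metric is an ultrametric taking only the values $2^{-|F_n|}$, so every basic open ball in the effectively presented Polish space $A^G$ coincides with some $\llb\sigma\rrb$ of diameter $2^{-|\dom(\sigma)|}$; consequently, effectively $s$-null witnesses may be taken to be computable sequences of such cylinders.

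For the upper bound $\effdim(\{x\}) \le \liminf_n \K(x\res F_n)/|F_n|$, fix $s$ strictly above the right-hand side and choose a rational $s'$ with $\liminf_n \K(x\res F_n)/|F_n| < s' < s$. For each $i$, let
$$T_i\,=\,\{\sigma \in A^*\,:\,\K(\sigma) \le \lfloor s'|\dom(\sigma)|\rfloor,\ |\dom(\sigma)| \ge n_i\},$$
which is c.e. uniformly in $i$ by dovetailing the universal machine. Since $\K(x\res F_n) \le s'|F_n|$ holds for infinitely many $n$, the cylinders $\{\llb\sigma\rrb : \sigma \in T_i\}$ cover $\{x\}$; and the crude count $|\{\sigma \in A^{F_n} : \K(\sigma) \le s'|F_n|\}| \le 2^{s'|F_n|+1}$ gives
$$\sum_{\sigma \in T_i} 2^{-s|\dom(\sigma)|}\,\le\,\sum_{n \ge n_i} 2 \cdot 2^{-(s-s')|F_n|},$$
which converges geometrically because $|F_n|$ grows polynomially in $n$. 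Choosing $n_i$ large forces this tail below $2^{-i}$, witnessing $\{x\}$ as effectively $s$-null.

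For the lower bound, fix $s > \effdim(\{x\})$ and take a uniformly computable witness $B_{ij} = \llb\sigma_{ij}\rrb$ with $x \in \bigcup_j B_{ij}$ and $\sum_j 2^{-s|\dom(\sigma_{ij})|} \le 2^{-i}$. Summing over $i$ gives $\sum_{i,j} 2^{-s|\dom(\sigma_{ij})|} \le 1$, so Kraft's inequality applied to the c.e. list of requests $(\sigma_{ij},\lceil s|\dom(\sigma_{ij})|\rceil + 1)$ yields a prefix-free decoder witnessing $\KP(\sigma_{ij}) \le s|\dom(\sigma_{ij})| + O(1)$. Since $x$ lies in some $B_{i,j(i)}$ for each $i$, and the inequality $\diam(B_{ij}) \le 2^{-i/s}$ forces $|\dom(\sigma_{i,j(i)})| \to \infty$, this delivers an unbounded sequence of indices $n$ with $\KP(x\res F_n)/|F_n| \le s + o(1)$. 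Translating $\KP$ to $\K$ costs only $O(\log|F_n|)/|F_n| = o(1)$, so $\liminf_n \K(x\res F_n)/|F_n| \le s$.

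The main obstacle is the Kraft-style step in the lower bound: one must verify that a c.e. supply of (string, length) requests with bounded $\sum 2^{-\text{length}}$ is realized by a single universal prefix-free machine, and that the resulting $\KP$-bound transfers to the plain complexity $\K$ without spoiling the $o(1)$ error after division by $|F_n|$. The rest is bookkeeping---identifying basic balls with cylinders, handling integer-part rounding in the threshold $\lfloor s'|F_n|\rfloor$, and noting that the polynomial growth of $|F_n|$ makes the geometric tails in the upper-bound construction shrink below $2^{-i}$.
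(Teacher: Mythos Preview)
Your reduction is exactly the paper's: invoke equation (\ref{eq:effdim}) to pass to singletons, then use the pointwise identity $\effdim(\{x\})=\liminf_n\K(x\res F_n)/|F_n|$. The paper simply cites this pointwise identity from \cite[Theorem 13.3.4]{do-hi-book}, whereas you supply a direct argument; your sketch of that argument is standard and correct. Two minor remarks: in the lower bound, since $\K(\sigma)\le\KP(\sigma)+O(1)$ always holds, no logarithmic correction is needed when passing from $\KP$ to $\K$; and the tail series $\sum_{n\ge n_i}2^{-(s-s')|F_n|}$ is not literally geometric but decays much faster (like $2^{-cn^d}$), which of course only helps.
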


\begin{proof}
  This follows from (\ref{eq:effdim}) together with \cite[Theorem
  13.3.4]{do-hi-book}.
\end{proof}

\subsection{Measure-theoretic entropy}
\label{sec:ent-m}

We now present some background material on measure-theoretic entropy.
We state two important theorems without proof but with references to
the literature.

Let $X,\mu$ be a probability space.  An action $T:G\times X\to X$ is
said to be \emph{measure-preserving} if $\mu((T^g)^{-1}(P))=\mu(P)$
for each $g\in G$ and each $\mu$-measurable set $P\subseteq X$.  In
this case the ordered triple $X,T,\mu$ is called a
\emph{measure-theoretic dynamical system}.  We now proceed to define
the measure-theoretic entropy of $X,T,\mu$.

A \emph{measurable partition} of $X$ is a finite set $\calP$ of
pairwise disjoint $\mu$-measurable subsets of $X$ such that
$X=\bigcup\calP$.  In this case we write
\[
H(X,\mu,\calP)=-\sum_{P\in\calP}\mu(P)\log_2\mu(P)\,.
\]
If $\calP$ and $\calQ$ are measurable
partitions of $X$, then
\[
\sup(\calP,\calQ)=\{P\cap Q\mid P\in\calP,Q\in\calQ\}
\]
is again a measurable partition of $X$, and it can be shown
\cite[10.4(d)]{de-gr-si} that
\begin{equation}
  \label{eq:HPQ}
  H(X,\mu,\sup(\calP,\calQ))\le H(X,\mu,\calP)+H(X,\mu,\calQ)\,.
\end{equation}
For each $g\in G$ and each measurable partition $\calP$ of $X$, we
have another measurable partition
$\calP^g=(T^g)^{-1}(\calP)=\{(T^g)^{-1}(P)\mid P\in\calP\}$.  Hence,
for each finite set $F\subset G$ we have a measurable partition
$\calP^F=\sup\{\calP^g\mid g\in F\}$.  Let us write
$H(X,T,\mu,\calP,F)=H(X,\mu,\calP^F)$.  It follows from (\ref{eq:HPQ})
that $H(X,T,\mu,\calP,F)\le|F|H(X,\mu,\calP)$.  We define
\begin{equation}
  \label{eq:entXmuP}
  \ent(X,T,\mu,\calP)=\lim_{n\to\infty}\frac{H(X,T,\mu,\calP,F_n)}{|F_n|}
\end{equation}
and
\[
\ent(X,T,\mu)=\sup\{\ent(X,T,\mu,\calP)\mid\calP\hbox{ is a measurable
  partition of }X\}.
\]
It can be proved that the limit in (\ref{eq:entXmuP}) exists.  The
nonnegative real number $\ent(X,T,\mu)$ is known as the
\emph{measure-theoretic entropy} of $X,T,\mu$.  It plays an important
role in ergodic theory.  See for instance
\cite{de-gr-si,mis-Nd,ow-amenable}.

Let $X,T,\mu$ be a measure-theoretic dynamical system.  A set
$P\subseteq X$ is said to be \emph{$G$-invariant} if
$(T^g)^{-1}(P)\subseteq P$ for all $g\in G$.  The system $X,T,\mu$ is
said to be \emph{ergodic} if for every $G$-invariant $\mu$-measurable
set $P\subseteq X$ we have $\mu(P)=0$ or $\mu(P)=1$.

Now let $d$ be a positive integer, let $G=\NN^d$ or $\ZZ^d$, let $A$
be a nonempty finite set of symbols, and let $X\subseteq A^G$ be a
subshift.  A Borel probability measure $\mu$ on $X$ is said to be
\emph{shift-invariant} if $\mu((S^g)^{-1}(P))=\mu(P)$ for each $g\in
G$ and each Borel set $P\subseteq X$.  In this case $X,S,\mu$ is a
measure-theoretic dynamical system, and we write
$H(X,\mu,\calP)=H(X,S,\mu,\calP)$, $\ent(X,\mu)=\ent(X,S,\mu)$, etc.
As in \S\ref{sec:sd} it can be shown that
$\ent(X,\mu)=\ent(X,\mu,\calP)$ where $\calP$ is the \emph{canonical
  measurable partition} of $X$, namely $\calP=\{\llb a\rrb\cap X\mid
a\in A\}$.

In the case of an ergodic subshift, there is the following suggestive
characterization of measure-theoretic entropy.

\begin{thm}[Shannon/McMillan/Breiman]
  \label{thm:smb}
  Let $X\subseteq A^G$ be a subshift, and let $\mu$ be an ergodic,
  shift-invariant, probability measure on $X$.  Then for $\mu$-almost
  all $x\in X$ we have
  \[
  \ent(X,\mu)=\lim_{n\to\infty}\frac{\log_2\mu(\llb x\res
    F_n\rrb)}{-|F_n|}\,.
  \]  
\end{thm}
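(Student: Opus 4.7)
The plan is to show $\mu$-almost-sure convergence of $I_n(x)/|F_n|$ to $\ent(X,\mu)$, where $I_n(x)=-\log_2\mu(\llb x\res F_n\rrb)$. The identification of the limit is essentially formal: any $\mu$-a.s.\ limit of $I_n/|F_n|$ is shift-invariant (using that $|F_n\triangle(g+F_n)|/|F_n|\to 0$, a F\o lner property of the cubes), hence $\mu$-a.s.\ constant by ergodicity. Since $\int I_n\,d\mu=H(X,\mu,\calP,F_n)$ with $\calP$ the canonical partition, equation (\ref{eq:entXmuP}) together with uniform integrability forces that constant to equal $\ent(X,\mu)$. The real work is the almost-sure convergence itself.

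First I would decompose $I_n$ using the chain rule for conditional probability. Fix the lexicographic order $\llex$ on $G$ and set $P_0=\{h\in G\mid h\llex 0\}$. For each $g\in F_n$ write $Q_n(g)=(F_n-g)\cap P_0$, and for each finite $F\subset P_0$ let
\[
i_F(x)\;=\;-\log_2\mu\bigl(\llb x(0)\rrb\;\bigl|\;\calP^F\bigr)(x)
\]
be the conditional information of $\calP$ at the origin given its shifts indexed by $F$. Enumerating $F_n$ in lex order and using shift-invariance of $\mu$, a telescoping calculation gives
\[
I_n(x)\;=\;\sum_{g\in F_n}i_{Q_n(g)}(S^g x).
\]
As $F$ increases to $P_0$ through finite subsets, L\'evy's martingale convergence theorem yields $i_F\to i_{P_0}$ $\mu$-a.e.\ and in $L^1$; the Kolmogorov--Sinai identification of entropy as conditional entropy over the lex past gives $\int i_{P_0}\,d\mu=\ent(X,\mu)$.

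Next I would compare the above sum to the ergodic average of $h:=i_{P_0}$. Wiener's multidimensional pointwise ergodic theorem, applied to $h$ along the F\o lner sequence $F_n$ with $\mu$ ergodic, gives $\mu$-a.e.\
\[
\frac{1}{|F_n|}\sum_{g\in F_n}h(S^g x)\;\longrightarrow\;\int h\,d\mu\;=\;\ent(X,\mu).
\]
It remains to show that the error
\[
E_n(x)\;=\;\frac{1}{|F_n|}\sum_{g\in F_n}\bigl(i_{Q_n(g)}-h\bigr)(S^g x)
\]
tends to $0$ $\mu$-a.e.

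The hardest step is precisely this last control. The conditioning sets $Q_n(g)$ are not translates of a single set and degenerate for $g$ near the boundary of $F_n$, so the ergodic averaging cannot be invoked directly on the summands. The standard remedy is a Chac\'on--Ornstein-type maximal inequality applied to $i^*(x)=\sup_F i_F(x)$, which is $\mu$-integrable by Doob's martingale maximal inequality applied to the conditional probabilities $\mu(\llb\cdot(0)\rrb\mid\calP^F)$, combined with the F\o lner property to show that the $g\in F_n$ for which $Q_n(g)$ differs substantially from a fixed initial segment of $P_0$ constitute a vanishing fraction of $F_n$. This is the essence of the Ornstein--Weiss argument, which proves the SMB theorem for every countable amenable group action and specialises cleanly to $\NN^d$ and $\ZZ^d$ with the cube F\o lner sequence used here.
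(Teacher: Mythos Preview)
The paper does not prove this theorem: its entire proof is the single line ``See \cite{smb-amenable}'' (Ornstein--Weiss). Your sketch is precisely an outline of that cited argument---the chain-rule decomposition of the information function, L\'evy/Doob control of the conditional information, the pointwise ergodic theorem along the cube F\o lner sequence, and the maximal-function estimate to handle the boundary terms---so there is nothing in the paper to compare it against beyond noting that you have reproduced the referenced proof rather than merely invoked it.

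One genuine wrinkle worth flagging: as written, your decomposition only works for $G=\ZZ^d$. For $G=\NN^d$ the set $P_0=\{h\in G\mid h\llex 0\}$ is empty (since $0$ is the lex-minimum of $\NN^d$), so $i_{P_0}=i_\emptyset$ is the \emph{unconditional} information and $\int i_{P_0}\,d\mu=H(X,\mu,\calP)$, not $\ent(X,\mu)$; the Kolmogorov--Sinai identification you invoke then fails. The standard repair is to pass to the natural $\ZZ^d$-extension of the one-sided system, which has the same entropy and assigns the same measure to each cylinder $\llb x\res F_n\rrb$, so the $\ZZ^d$ case of the theorem immediately yields the $\NN^d$ case. (The Ornstein--Weiss paper the author cites is itself stated for amenable \emph{groups}, so the same reduction is implicit in the paper's one-line proof as well.)
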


\begin{proof}
  See \cite{smb-amenable}.
\end{proof}

We end this section by noting a significant relationship between
topological entropy and measure-theoretic entropy.

\begin{thm}[Variational Principle]
  \label{thm:var}
  For any subshift $X\subseteq A^G$ we have
  \[
  \ent(X)=\max_\mu\ent(X,\mu)
  \]
  where $\mu$ ranges over ergodic, shift-invariant, probability
  measures on $X$.
\end{thm}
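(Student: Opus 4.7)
The plan is to establish both inequalities; the harder direction is the construction of an ergodic shift-invariant measure whose entropy realizes $\ent(X)$. For every shift-invariant probability measure $\mu$, I use the canonical measurable partition $\calP = \{\llb a\rrb \cap X : a \in A\}$ introduced at the end of \S\ref{sec:ent-m}. The nonempty cells of $\calP^{F_n}$ are exactly $\llb \sigma\rrb \cap X$ for $\sigma \in X\res F_n$, so there are at most $|X\res F_n|$ of them; since the Shannon entropy of a probability measure on $N$ atoms is bounded by $\log_2 N$, we get $H(X,\mu,\calP^{F_n}) \le \log_2 |X\res F_n|$. Dividing by $|F_n|$, passing to the limit, and invoking (\ref{eq:entX}) yields $\ent(X,\mu) = \ent(X,\mu,\calP) \le \ent(X)$.

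For the reverse direction I use a Misiurewicz-type averaging construction. For each $n$, choose one $x_\sigma \in \llb\sigma\rrb \cap X$ per $\sigma \in X\res F_n$, let $\nu_n = |X\res F_n|^{-1} \sum_\sigma \delta_{x_\sigma}$ (so $H(X,\nu_n,\calP^{F_n}) = \log_2 |X\res F_n|$), and form the F{\o}lner-averaged measure $\mu_n = |F_n|^{-1} \sum_{g \in F_n} (S^g)_* \nu_n$. By weak-$*$ compactness extract a subsequential limit $\mu_{n_k} \to \mu$; the F{\o}lner property $|(h+F_n)\triangle F_n|/|F_n| \to 0$ for each $h \in G$ forces $\mu$ to be shift-invariant. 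The crux is the entropy estimate: for fixed $m$, concavity of $H(\cdot,\calP^{F_m})$ in the measure together with the identity $H((S^g)_*\nu_n,\calP^{F_m}) = H(\nu_n,\calP^{g+F_m})$ gives
$$H(X,\mu_n,\calP^{F_m}) \ge \frac{1}{|F_n|}\sum_{g \in F_n} H(X,\nu_n,\calP^{g+F_m}).$$
A double-counting argument in which translates $g + F_m$ for $g \in F_n$ cover a near-copy of $F_n$ with multiplicity $|F_m|$, combined with subadditivity of $H(\nu_n,\cdot)$ over joins of partitions, converts the right-hand side into $\frac{|F_m|}{|F_n|}\log_2 |X\res F_n|$ up to a boundary term that is $o(1)$ by the F{\o}lner property. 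Because the cells of $\calP^{F_m}$ are clopen in $X$, the functional $\nu \mapsto H(X,\nu,\calP^{F_m})$ is weak-$*$ continuous, so the estimate passes to $\mu$ in the limit: $H(X,\mu,\calP^{F_m})/|F_m| \ge \ent(X)$ for every $m$, hence $\ent(X,\mu) \ge \ent(X)$.

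Finally, to pass from the invariant $\mu$ to an ergodic measure of equal entropy, I invoke the ergodic decomposition theorem for amenable group actions: writing $\mu = \int \mu_\omega\,d\mu(\omega)$ with each $\mu_\omega$ ergodic and shift-invariant, the affinity relation $\ent(X,\mu) = \int \ent(X,\mu_\omega)\,d\mu(\omega)$ forces some $\mu_\omega$ to satisfy $\ent(X,\mu_\omega) \ge \ent(X,\mu) \ge \ent(X)$, so combined with the easy direction the maximum is attained. The main obstacle is the combinatorial entropy estimate above: one must carefully tile $F_n$ by translates of $F_m$ and average over shifts of the tiling to turn the classical subadditivity inequality (which goes the wrong way on any single translate) into a usable lower bound, with the boundary error suppressed by the F{\o}lner ratio $|\partial F_n|/|F_n| \to 0$.
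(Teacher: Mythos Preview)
The paper does not give its own proof of this theorem; it simply refers the reader to Misiurewicz \cite{mis-Nd} and Denker--Grillenberger--Sigmund \cite[\S\S16--20]{de-gr-si}. Your sketch is exactly the Misiurewicz argument those references contain: the easy bound $H(X,\mu,\calP^{F_n})\le\log_2|X\res F_n|$, the averaged empirical measures $\mu_n=|F_n|^{-1}\sum_{g\in F_n}(S^g)_*\nu_n$, the concavity-plus-tiling lower bound for $H(X,\mu_n,\calP^{F_m})$, weak-$*$ passage to the limit via clopenness of the cylinder partition, and finally the ergodic decomposition together with affinity of the entropy map to extract an ergodic maximizer. So your approach coincides with what the paper invokes by citation; the only (harmless) slip is notational---in the affinity formula the integration should be against the decomposition measure on the ergodic components rather than against $\mu$ itself.
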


\begin{proof}
  See \nocite{asterisque-40}\cite{mis-Nd} and
  \cite[\S\S16--20]{de-gr-si}.
\end{proof}

\section{Entropy = dimension}
\label{sec:ent-dim}

As in \S\ref{sec:background} let $d$ be a positive integer, let
$G=\NN^d$ or $G=\ZZ^d$, let $A$ be a finite set of symbols, and let
$X\subseteq A^G$ be a subshift.  The purpose of this section is to
prove that $\ent(X)=\dim(X)$.  The special case $G=\NN$ is due to
Furstenberg \cite[Proposition III.1]{furst-tcs-67}.  However, the
general result for $G=\NN^d$ or $G=\ZZ^d$ appears to be new.

As a warm-up for our proof of the general result, we first present
Furstenberg's proof of the special case $G=\NN$.

\begin{thm}[Furstenberg 1967]
  \label{thm:N}
  Let $X\subseteq A^\NN$ be a one-sided subshift.  Then
  $\ent(X)=\dim(X)$.
\end{thm}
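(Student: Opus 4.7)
Since Lemma \ref{lem:easy} already supplies $\ent(X)\ge\dim(X)$, I only need to establish the reverse inequality $\dim(X)\ge\ent(X)$. My plan is a Frostman-style mass-distribution argument in which the measure is supplied by the ergodic theory of $X,S$ itself. The first step is to invoke the Variational Principle (Theorem \ref{thm:var}) to obtain an ergodic shift-invariant probability measure $\mu$ on $X$ with $\ent(X,\mu)=\ent(X)$, and then to apply the Shannon--McMillan--Breiman theorem (Theorem \ref{thm:smb}) to conclude that for $\mu$-a.e.\ $x\in X$,
$-\log_2\mu(\llb x\res F_n\rrb)/|F_n|\to\ent(X)$.

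Next, fix $\epsilon>0$. Applying Egorov's theorem to this sequence of measurable functions, I would extract $Y\subseteq X$ with $\mu(Y)\ge 1/2$ and an integer $N$ such that $\mu(\llb x\res F_n\rrb)\le 2^{-(\ent(X)-\epsilon)|F_n|}$ for every $x\in Y$ and every $n\ge N$. The Frostman step is then routine: any cover of $X$ by basic open sets $\{\llb\sigma_i\rrb\}$ of diameter $\le 2^{-(N+1)}$ has $|\sigma_i|\ge N+1$, and whenever $\llb\sigma_i\rrb\cap Y\ne\emptyset$ one picks $x$ in the intersection to get $\sigma_i=x\res F_{n_i}$, whence $\mu(\llb\sigma_i\rrb)\le 2^{-(\ent(X)-\epsilon)|F_{n_i}|}=\diam(\llb\sigma_i\rrb)^{\ent(X)-\epsilon}$. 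Summing (cylinders disjoint from $Y$ do not affect the lower bound on $\mu(Y)$) gives $1/2\le\mu(Y)\le\sum_i\diam(\llb\sigma_i\rrb)^{\ent(X)-\epsilon}$, so $\mu_{\ent(X)-\epsilon}(X)\ge 1/2>0$ and hence $\dim(X)\ge\ent(X)-\epsilon$. Letting $\epsilon\to 0$ closes the argument.

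Two points require care but should not cause real difficulty. First, to restrict from arbitrary covers to covers by basic open sets (cylinders) of uniformly small diameter, I would reuse the ultrametric reduction already used in the proof of Lemma \ref{lem:easy}. Second, a tempting ``elementary'' alternative --- exploit the subadditive lower bound $|X\res F_n|\ge 2^{\ent(X)|F_n|}$ together with the partition identity $|X\res F_N|=\sum_i|\{\tau\in X\res F_N:\tau\res F_{n_i}=\sigma_i\}|$ for a finite disjoint cover --- appears unable to bound $\sum_i 2^{-s(n_i+1)}$ from below, because the subadditive data alone yields no matching uniform upper bound on $|X\res F_m|$ with exponent close to $\ent(X)$. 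For this reason the measure-theoretic route looks the cleanest, and has the pleasant feature that essentially the same outline should transport directly to the general $G=\NN^d,\ZZ^d$ setting treated later in the paper.
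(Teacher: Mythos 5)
Your proof is correct, but it takes a genuinely different route from the paper's. The paper proves Theorem \ref{thm:N} by Furstenberg's combinatorial method: given $s$ with $\mu_s(X)=0$, extract a finite cover $\{\llb\sigma\rrb\mid\sigma\in I\}$ with $\sum_{\sigma\in I}2^{-|\sigma|s}<1$, use shift-invariance and the concatenation structure special to $G=\NN$ to write each $x\in X$ as $\sigma_1\cat\sigma_2\cat\cdots$ with all $\sigma_k\in I$, and then count: $|X\res F_n|2^{-|F_n|s}$ stays bounded by $2^{ms}M$ where $M=\sum_{k}\bigl(\sum_{\sigma\in I}2^{-|\sigma|s}\bigr)^k<\infty$, so $\ent(X)\le s$ by Lemma \ref{lem:entX}. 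You instead prove the hard inequality $\dim(X)\ge\ent(X)$ directly by a mass-distribution argument, and the steps all check out: Theorem \ref{thm:var} supplies an ergodic $\mu$ with $\ent(X,\mu)=\ent(X)$; Theorem \ref{thm:smb} plus Egorov gives the uniform decay $\mu(\llb x\res F_n\rrb)\le 2^{-(\ent(X)-\epsilon)|F_n|}$ on a set $Y$ of measure $\ge 1/2$ (Egorov is even dispensable, since the sets $Y_N=\{x\mid\mu(\llb x\res F_n\rrb)\le 2^{-(\ent(X)-\epsilon)|F_n|}\hbox{ for all }n\ge N\}$ increase to a set of full measure); the ultrametric reduction to cylinder covers is legitimate because distances take only the values $2^{-|F_n|}$, so any $E$ with $\diam(E)\le 2^{-|F_n|}$ lies in a single cylinder on $F_n$ of no larger diameter; and summing $\mu(\llb\sigma_i\rrb)\le\diam(\llb\sigma_i\rrb)^{\ent(X)-\epsilon}$ over the cylinders meeting $Y$ gives $\mu_{\ent(X)-\epsilon}(X)\ge 1/2>0$, hence $\dim(X)\ge\ent(X)-\epsilon$. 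The trade-offs are worth noting. The paper's route is elementary and self-contained, needing only Lemma \ref{lem:entX} (and, for general $G$, the discrete Vitali covering argument of Lemma \ref{lem:vitali}); yours imports two deep black boxes that the paper only quotes without proof (Misiurewicz for the Variational Principle, Ornstein--Weiss for Shannon/McMillan/Breiman). In exchange, your argument never uses the concatenation structure of $\NN$, so it transfers verbatim to $G=\NN^d$ and $G=\ZZ^d$ and yields Theorem \ref{thm:G} with no additional work, exactly as you predict; it also runs in close parallel to the paper's own proof of Lemma \ref{lem:lim}, which uses precisely the same VP+SMB machinery (there combined with Borel/Cantelli). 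One caveat of taste rather than correctness: the paper's Question 1 explicitly asks for proofs avoiding measure-theoretic entropy, a desideratum the combinatorial route satisfies and yours deliberately does not.
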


\begin{proof}
  By Lemma \ref{lem:easy} we have $\ent(X)\ge\dim(X)$.  To prove
  $\ent(X)\le\dim(X)$ it suffices to prove $\ent(X)\le s$ for all $s$
  such that $\mu_s(X)=0$.  Since $\mu_s(X)=0$ let $\calE$ be such that
  $X\subseteq\bigcup\calE$ and $\sum_{E\in\calE}\diam(E)^s<1$.  As
  noted in \S\ref{sec:dim}, we may safely assume that each $E\in\calE$
  is of the form $E=\llb\sigma\rrb$ where $\sigma\in A^*$, so that
  $\diam(E)=2^{-|\sigma|}$.  By compactness we may assume that $\calE$
  is finite.  Let us write $\calE=\{\llb\sigma\rrb\mid\sigma\in I\}$
  where $I\subset A^*$ is finite.  Let $m=\max\{|\sigma|\mid\sigma\in
  I\}$.  From
  \[
  \sum_{\sigma\in I}2^{-|\sigma|s}=\sum_{E\in\calE}\diam(E)^s<1
  \]
  it follows that
  \[
  \sum_{\sigma_1,\ldots,\sigma_k}2^{-(|\sigma_1|+\cdots+|\sigma_k|)s}
  =\sum_{k=1}^\infty\left(\sum_{\sigma\in I}2^{-|\sigma|s}\right)^k=M<\infty
  \]
  where the first sum is taken over all nonempty finite sequences
  $\sigma_1,\ldots,\sigma_k\in I$.

  The previous paragraph applies to any subshift.  We now bring in the
  special assumption $G=\NN$.  Because $G=\NN$ and
  $F_n=\{0,1,\ldots,n\}$, each $x\in A^G$ is an infinite sequence of
  symbols in $A$, and each $\sigma\in A^*=\bigcup_{n=0}^\infty
  A^{F_n}$ is a nonempty finite sequence of symbols in $A$.  Thus,
  given $x\in X$, we can recursively define an infinite sequence
  $\sigma_1,\ldots,\sigma_k,\ldots\in I$ such that
  $S^{|\sigma_1|+\cdots+|\sigma_{k-1}|}(x)\in\llb\sigma_k\rrb$ for all
  $k$, and then $x=\sigma_1\cat\cdots\cat\sigma_k\cat\cdots$ where
  $\cat$ denotes concatenation of finite sequences.  Now, given
  $n\ge0$, let $k$ be as small as possible such that $x\res
  F_n\subseteq\sigma_1\cat\cdots\cat\sigma_k$.  We then have
  \begin{equation}
    \label{eq:Fnm}
    |F_n|\le|\sigma_1|+\cdots+|\sigma_k|<|F_n|+m
  \end{equation}
  and $\llb x\res
  F_n\rrb\supseteq\llb\sigma_1\cat\cdots\cat\sigma_k\rrb$.  Since the
  sets $\llb\xi\rrb$ for $\xi\in X\res F_n$ are pairwise disjoint, it
  follows that $|X\res F_n|$ is less than or equal to the number of
  finite sequences $\sigma_1,\ldots,\sigma_k\in I$ such that
  (\ref{eq:Fnm}) holds.  For each such finite sequence we have
  $2^{-|F_n|s}<2^{ms}2^{-(|\sigma_1|+\cdots+|\sigma_k|)s}$, so by
  summing over all such finite sequences we obtain $|X\res
  F_n|2^{-|F_n|s}<2^{ms}M$.  Thus $|X\res F_n|2^{-|F_n|s}$ is bounded
  as $n\to\infty$.  It follows by Lemma \ref{lem:entX} that
  $\ent(X)\le s$, Q.E.D.
\end{proof}

We now generalize Furstenberg's result.

\begin{thm}
  \label{thm:G}
  Let $G=\NN^d$ or $G=\ZZ^d$ where $d$ is a positive integer.  Let $A$
  be a finite nonempty set of symbols, and let $X\subseteq A^G$ be a
  subshift.  Then $\ent(X)=\dim(X)$.
\end{thm}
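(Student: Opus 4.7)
By Lemma~\ref{lem:easy} we already have $\ent(X)\ge\dim(X)$, so the
plan is to establish the reverse inequality. Concretely, I will prove
that every $s<\ent(X)$ satisfies $s\le\dim(X)$ by producing a positive
lower bound on the $s$-dimensional Hausdorff measure $\mu_s(X)$. The
main obstacle, compared with Furstenberg's one-dimensional proof of
Theorem~\ref{thm:N}, is that for $d\ge 2$ one cannot literally tile the
cube $F_n$ by a disjoint concatenation of cubes $F_{k_\sigma}$ drawn
from a finite covering family in $A^*$; the clean greedy tiling that
works on $\NN$ has no obvious analogue in $\NN^d$ or $\ZZ^d$. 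I
therefore plan to bypass the combinatorics entirely and route through
the measure-theoretic machinery of \S\ref{sec:ent-m}.

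The first move is to invoke the variational principle
(Theorem~\ref{thm:var}) to pick an ergodic, shift-invariant probability
measure $\mu$ on $X$ with $\ent(X,\mu)>s$. The
Shannon--McMillan--Breiman theorem (Theorem~\ref{thm:smb}) then says
that for $\mu$-almost every $x\in X$,
\[
\lim_{n\to\infty}\frac{\log_2\mu(\llb x\res F_n\rrb)}{-|F_n|}
=\ent(X,\mu)>s,
\]
so that $\mu(\llb x\res F_n\rrb)<2^{-s|F_n|}$ for all sufficiently
large $n$. Letting $A_N$ be the set of $x\in X$ on which this
estimate holds for every $n\ge N$, the sets $A_N$ are increasing and
exhaust a full $\mu$-measure set, so I may fix $N$ with
$\mu(A_N)\ge 1/2$.

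The remaining step converts SMB-smallness of typical cylinders into a
Hausdorff-measure bound. Given any covering of $X$ by sets of
diameter at most $2^{-|F_N|}$, I may assume as in the proof of
Lemma~\ref{lem:easy} that it is of the form $\{\llb\sigma\rrb\mid
\sigma\in J\}$ with $k_\sigma\ge N$ for every $\sigma\in J$. For any
such $\sigma$ that meets $A_N$, picking $x\in\llb\sigma\rrb\cap A_N$
gives $\llb\sigma\rrb=\llb x\res F_{k_\sigma}\rrb$, and the defining
property of $A_N$ forces
$\mu(\llb\sigma\rrb)<2^{-s|F_{k_\sigma}|}=\diam(\llb\sigma\rrb)^s$.
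Summing over the $\sigma\in J$ that meet $A_N$, and noting that these
$\sigma$'s alone already cover $A_N$, yields
$\sum_{\sigma\in J}\diam(\llb\sigma\rrb)^s\ge\mu(A_N)\ge 1/2$.
Infimizing over such covers and letting the diameter bound tend to $0$
gives $\mu_s(X)\ge 1/2>0$, whence $s\le\dim(X)$. As $s<\ent(X)$ was
arbitrary, $\ent(X)\le\dim(X)$ follows. The conceptual content sits
entirely in the two black-box inputs: the variational principle
supplies a measure witnessing entropy above $s$, and SMB then forces
typical cylinders to be small enough that the Hausdorff lower bound
drops out automatically.
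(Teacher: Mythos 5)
Your proof is correct, but it takes a genuinely different route from the paper's. The paper proves $\ent(X)\le\dim(X)$ combinatorially: starting from $\mu_s(X)=0$ it extracts finite cylinder covers $I_l$ with rapidly growing configuration sizes, proves a discrete Vitali-type covering lemma (Lemma \ref{lem:vitali}) showing that each $x\res F_n$ can be almost exactly tiled by pairwise disjoint translates of elements of $\bigcup_l I_l$, and then counts (Lemma \ref{lem:count}) to bound $|X\res F_n|2^{-|F_n|(s+2\epsilon\log_2(|A|+1))}$ and conclude via Lemma \ref{lem:entX} --- precisely the $d$-dimensional surrogate for Furstenberg's concatenation trick whose absence you correctly identified. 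You instead run a mass-distribution argument: the Variational Principle (Theorem \ref{thm:var}) supplies an ergodic $\mu$ with $\ent(X,\mu)>s$, Shannon/McMillan/Breiman (Theorem \ref{thm:smb}) gives $\mu(\llb x\res F_n\rrb)<2^{-s|F_n|}$ for all $n\ge N$ on a set $A_N$ with $\mu(A_N)\ge 1/2$, and then every cylinder cover at scale $\le 2^{-|F_N|}$ has $s$-weight at least $\mu(A_N)$, so $\mu_s(X)\ge 1/2>0$ and $\dim(X)\ge s$. Your route is shorter, avoids the $\epsilon$-bookkeeping entirely, and in fact proves something slightly stronger than the paper's contrapositive, namely that $\mu_s(X)>0$ for \emph{every} $s<\ent(X)$. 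What it costs is self-containedness: it imports two deep theorems about $\NN^d$- and $\ZZ^d$-actions (Misiurewicz's variational principle and the Ornstein--Weiss SMB theorem), which the paper states without proof and deliberately quarantines to Lemma \ref{lem:lim}; the closing Questions even ask for proofs avoiding measure-theoretic entropy, so the paper's elementary combinatorial route is a design choice rather than an oversight. Two small points to make explicit in a final write-up: first, $A_N$ is Borel (a countable intersection of finite unions of cylinders), so $\mu(A_N)$ makes sense; second, when reducing an arbitrary cover to a cylinder cover you should dispose of diameter-zero (singleton) members, e.g., by enclosing each in a cylinder so small that the total added $s$-weight is below an arbitrary $\delta$ --- the same gloss the paper makes in Lemma \ref{lem:easy}, but it deserves a word here because your direction needs a \emph{lower} bound on the covering sum, whereas the paper's needed an upper one.
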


\begin{proof}
  By Lemma \ref{lem:easy} we have $\ent(X)\ge\dim(X)$.  To prove
  $\ent(X)\le\dim(X)$ it suffices to prove $\ent(X)\le s$ for all $s$
  such that $\mu_s(X)=0$.  Using $\mu_s(X)=0$ and the compactness of
  $X$, we can find finite sets $I_l\subset A^*$ for $l=1,2,\ldots$
  such that $X\subseteq\bigcup_{\sigma\in I_l}\llb\sigma\rrb$ and
  $\sum_{\sigma\in I_l}2^{-|\sigma|s}<2^{-l}$ and $|\sigma|<<|\tau|$
  for all $\sigma\in I_l$ and all $\tau\in I_{l+1}$.  Let
  $I_\infty=\bigcup_{l=1}^\infty I_l$.  We have
  \[
  \sum_{\sigma\in
    I_\infty}2^{-|\sigma|s}<\sum_{l=1}^\infty2^{-l}=1
  \]
  hence
  \[
  \sum_{\sigma_1,\ldots,\sigma_k}2^{-(|\sigma_1|+\cdots+|\sigma_k|)s}
  =\sum_{k=1}^\infty\left(\sum_{\sigma\in
      I_\infty}2^{-|\sigma|s}\right)^k=M<\infty
  \]
  where the first sum is taken over all nonempty finite sequences
  $\sigma_1,\ldots,\sigma_k\in I_\infty$.

  For all $\sigma\in A^*$ and all $g\in G$, let $\sigma^g=$ the
  $g$-\emph{translate} of $\sigma$, i.e., $\dom(\sigma^g)=\{g+h\mid
  h\in\dom(\sigma)\}$ and $\sigma^g(g+h)=\sigma(h)$ for all
  $h\in\dom(\sigma)$.  Note that $|\sigma^g|=|\sigma|$ and
  $\llb\sigma^g\rrb=\llb\sigma\rrb^g=(S^g)^{-1}(\llb\sigma\rrb)$.
  Since $X$ is a subshift and $X\subseteq\bigcup_{\sigma\in
    I_l}\llb\sigma\rrb$ for all $l$, we have
  \begin{center}
    $\forall l\,(\forall g\in G)\,(\forall x\in X)\,(\exists\sigma\in
    I_l)\,(x\in\llb\sigma^g\rrb)$.
  \end{center}
  Let $J_\infty=\bigcup_{l=1}^\infty J_l$ where
  $J_l=\{\sigma^g\mid\sigma\in I_l,g\in G\}$.

  \begin{lem}
    \label{lem:vitali}
    Let $\epsilon>0$ be given.  For all sufficiently large $n$ and
    each $x\in X$, we can find a pairwise disjoint set $L\subset
    J_\infty$ such that $\bigcup L\subseteq x\res F_n$ and
    $|\bigcup L|>(1-\epsilon)|F_n|$ and $|L|<\epsilon|F_n|$.
  \end{lem}

  \begin{proof}
    The proof may be viewed as a discrete analog of the classical
    proof of the Vitali Covering Lemma.  Given an ``extremely large''
    configuration $x\res F_n$, we begin by filling in as much of
    $x\res F_n$ as possible with pairwise disjoint ``very very large''
    configurations from $J_\infty$.  After that, we fill in the gaps
    with pairwise disjoint ``very large'' configurations from
    $J_\infty$.  After that, we fill in the remaining gaps with
    pairwise disjoint ``large'' configurations from $J_\infty$.  Et
    cetera.

    Specifically, let $l$ be so large that $(1-(1/4)^d)^l<\epsilon$
    and $1<\epsilon|\sigma|$ for all $\sigma\in I_l$, and let $n$ be
    so large that $n>>|\sigma|$ for all $\sigma\in I_{2l-1}$.  Given
    $x\in X$, let $\xi=x\res F_n$ and let $K_1=\{\tau\in
    J_{2l-1}\mid\tau\subset\xi\}$.  Note that $|\bigcup
    K_1|\ge(3/4)^d|\xi|$, because $|\tau|<<n$ for all $\tau\in
    J_{2l-1}$.  Let $L_1\subseteq K_1$ be pairwise
    disjoint\footnote{Here are the details.  Define
      $L_1=\{\upsilon_j\mid j=1,2,\ldots\}$ where $\upsilon_j\in K_1$
      is chosen inductively so that $\upsilon_i\cap
      \upsilon_j=\emptyset$ for all $i<j$ and $|\upsilon_j|$ is as
      large as possible.  Then for all $\tau\in K_1$ there exists
      $\upsilon\in L_1$ such that $\tau\cap\upsilon\ne\emptyset$ and
      $|\tau|\le|\upsilon|$.  From this it follows that $|\bigcup
      L_1|\ge|\bigcup K_1|/3^d$.}  such that $|\bigcup L_1|\ge|\bigcup
    K_1|/3^d$.  It follows that $|\bigcup L_1|\ge|\xi|/4^d$, hence
    $|\xi\setminus\bigcup L_1|\le(1-(1/4)^d)|\xi|$.  If
    $|\xi\setminus\bigcup L_1|\le(1-(1/4)^d)^2|\xi|$, let
    $L_2=K_2=\emptyset$.  Otherwise, let $K_2=\{\tau\in
    J_{2l-2}\mid\tau\subset\xi\setminus\bigcup L_1\}$ and note that
    $|\bigcup K_2|\ge(3/4)^d|\xi\setminus\bigcup L_1|$, because
    $|\tau|<<|\upsilon|$ for all $\tau\in J_{2l-2}$ and all
    $\upsilon\in L_1$.  As before let $L_2\subseteq K_2$ be pairwise
    disjoint such that $|\bigcup L_2|\ge|\bigcup K_2|/3^d$.  It
    follows as before that $|\xi\setminus\bigcup(L_1\cup
    L_2)|\le(1-(1/4)^d)^2|\xi|$.  Continuing in this fashion for $l$
    steps, we obtain $L_1\subseteq J_{2l-1}$ and $L_2\subseteq
    J_{2l-2}$ and \ldots\ and $L_l\subseteq J_l$ such that
    $|\xi\setminus\bigcup(L_1\cup\cdots\cup
    L_l)|\le(1-(1/4)^d)^l|\xi|$.  Finally let $L=L_1\cup\cdots\cup
    L_l$.  By construction $L$ is pairwise disjoint and $\bigcup
    L\subseteq\xi$.  Moreover $|\xi\setminus\bigcup
    L|\le(1-(1/4)^d)^l|\xi|<\epsilon|\xi|=\epsilon|F_n|$, hence
    $|\bigcup L|>(1-\epsilon)|\xi|=(1-\epsilon)|F_n|$.  For each
    $\tau\in L$ we have $1<\epsilon|\tau|$, hence
    $|L|<\epsilon|\bigcup L|\le\epsilon|F_n|$.  This proves Lemma
    \ref{lem:vitali}.
  \end{proof}  

  \begin{lem}
    \label{lem:count}
    Let $\epsilon$ and $n$ be as in Lemma \ref{lem:vitali}.  Then
    $|X\res F_n|$ is less than or equal to $(|A|+1)^{2\epsilon|F_n|}$
    times the number of sequences $\sigma_1,\ldots,\sigma_k\in
    I_\infty$ such that $|\sigma_k|+\cdots+|\sigma_k|\le|F_n|$.
  \end{lem}

  \begin{proof}
    The idea of the proof is that, by Lemma \ref{lem:vitali}, each
    $x\res F_n\in X\res F_n$ is almost entirely covered by a finite
    sequence of pairwise disjoint translates of elements of
    $I_\infty$.  These elements of $I_\infty$ can be used to give a
    concise description of $x\res F_n$.

    Given $x\in X$ let $L=\{\tau_1,\ldots,\tau_k\}$ be as in the
    conclusion of Lemma \ref{lem:vitali}.  For each $i=1,\ldots,k$ let
    $\sigma_i\in I_\infty$ be such that $\tau_i=\sigma_i^g$ for some
    $g\in G$.  Since $\tau_1,\ldots,\tau_k$ are pairwise disjoint and
    $\bigcup_{i=1}^k\tau_i=\bigcup L\subseteq x\res F_n$, we have
    $|\sigma_1|+\cdots+|\sigma_k|=|\tau_1|+\cdots+|\tau_k|\le|F_n|$.
    Let $\llex$ be the lexicographical ordering of $F_n$.  For each
    $i=1,\ldots,k$ let $g_i=$ the least element of
    $\dom(\tau_i)\subseteq F_n$ with respect to $\llex$.  Reordering
    $\tau_1,\ldots,\tau_k$ as necessary, we may assume that
    $g_1\llex\cdots\llex g_k$.  Let
    $U=F_n\setminus\bigcup_{i=1}^k\dom(\tau_i)$, and let
    $V=U\cup\{g_1,\ldots,g_k\}$.  By Lemma \ref{lem:vitali} we have
    $|U|=|F_n|-|\bigcup L|<\epsilon|F_n|$ and $k=|L|<\epsilon|F_n|$,
    hence $|V|=|U|+k\le m$ where $m=2\lfloor\epsilon|F_n|\rfloor$.
    For each $j=1,\ldots,m$ define $a_j\in A\cup\{0\}$ as follows.  If
    $j\le|V|$ let $g$ be the $j$th element of $V$ with respect to
    $\llex$.  If $g\in U$, let $a_j=x(g)$.  Otherwise, let $a_j=0$.
    Clearly $x\res F_n$ can be recovered from the pair of sequences
    $a_1,\ldots,a_m$ and $\sigma_1,\ldots,\sigma_k$.  This proves
    Lemma \ref{lem:count}.
  \end{proof}

  To prove Theorem \ref{thm:G}, let $\epsilon$ and $n$ be as in Lemmas
  \ref{lem:vitali} and \ref{lem:count}.  Because
  $|\sigma_1|+\cdots+|\sigma_k|\le|F_n|$ implies $2^{-|F_n|s}\le
  2^{-(|\sigma_1|+\cdots+|\sigma_k|)s}$, it follows from Lemma
  \ref{lem:count} and the definition of $M$ that
  \[
  |X\res F_n|2^{-|F_n|s}<(|A|+1)^{2\epsilon|F_n|}M\,,
  \]
  i.e.,
  \[
  |X\res F_n|2^{-|F_n|(s+2\epsilon\log_2(|A|+1))}<M\,.
  \]
  Thus $|X\res F_n|2^{-|F_n|(s+2\epsilon\log_2(|A|+1))}$ is bounded as
  $n$ goes to infinity, so by Lemma \ref{lem:entX} we have $\ent(X)\le
  s+2\epsilon\log_2(|A|+1)$.  And this holds for all $\epsilon>0$, so
  $\ent(X)\le s$.  The proof of Theorem \ref{thm:G} is now complete.
\end{proof}

\section{Dimension = complexity}
\label{sec:dim-k}

As before let $d$ be a positive integer, let $G=\NN^d$ or $G=\ZZ^d$,
let $A$ be a finite set of symbols, and let $X\subseteq A^G$ be a
subshift.  In this section we prove that the Hausdorff dimension of
$X$ is equal to the effective Hausdorff dimension of $X$.  In addition
we obtain a sharp characterization of $\dim(X)$ in terms of the
Kolmogorov complexity of finite pieces of the individual orbits of
$X$, i.e., in terms of $\K(x\res F_n)$ for $x\in X$ and
$n=1,2,\ldots$.  Our results apply even when $X$ is not effectively
closed.

\begin{lem}
  \label{lem:limsup}
  For all $x\in X$ we have
  \begin{equation}
    \label{eq:limsup}
    \limsup_{n\to\infty}\frac{\K(x\res
      F_n)}{|F_n|}\,\,\le\,\,\ent(X)\,.
  \end{equation}
\end{lem}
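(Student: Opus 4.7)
The plan is to show that for every rational $s > \ent(X)$ the inequality $\K(x\res F_n) \le s|F_n|$ holds for all sufficiently large $n$, uniformly in $x \in X$; letting $s$ decrease to $\ent(X)$ then yields the lemma. By equation (\ref{eq:entX}), I first fix an integer $k$ large enough that $\log_2 |X\res F_k|/|F_k| < s$. The finite set $X\res F_k$, together with $k$ itself, will be hard-wired into a Turing machine as a constant piece of advice. This reduction is the heart of the argument, because $X$ need not be effectively closed — so we cannot uniformly enumerate $X\res F_n$ for every $n$ — but we can uniformly handle everything once a single finite window of the language of $X$ is fixed.

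Next, for each $n \gg k$, I would tile $F_n$ by pairwise disjoint translates $g_1 + F_k,\ldots,g_N + F_k \subseteq F_n$ of $F_k$, leaving only a boundary set $B \subseteq F_n$ of size $|B| = o(|F_n|)$ as $n\to\infty$ with $k$ fixed (here $N \sim |F_n|/|F_k|$ is the obvious grid count for the $\NN^d$ or $\ZZ^d$ case, and $B$ sits in a shell of width at most $k$ along the faces of $F_n$). Because $X$ is shift-invariant, the map $h \mapsto x(g_i + h)$ lies in $X\res F_k$ for each $i$, so it can be specified by an index of $\lceil\log_2|X\res F_k|\rceil$ bits drawn from a fixed enumeration of $X\res F_k$. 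The remaining cells $g\in B$ are encoded verbatim by listing the symbols $x(g)$ in lexicographic order of $g$.

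Concatenating a prefix-free encoding of $n$ with the $N$ block-indices and the $|B|$ boundary symbols yields a description of $x\res F_n$ of total length at most
\[
N\lceil\log_2|X\res F_k|\rceil + |B|\log_2|A| + O(\log n) + c_k,
\]
where $c_k$ absorbs the hard-wired advice and the universal machine's overhead, and is independent of $n$ and $x$. Dividing by $|F_n|$, using $N|F_k| + |B| = |F_n|$ and $|B| = o(|F_n|)$, gives
\[
\limsup_{n\to\infty}\frac{\K(x\res F_n)}{|F_n|} \,\le\, \frac{\log_2|X\res F_k|}{|F_k|} \,<\, s,
\]
and letting $s$ decrease to $\ent(X)$ finishes the proof. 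The main obstacle is precisely the one circumvented by the tiling: without shift-invariance we could not reuse a single finite chunk of information about $X$ across arbitrarily large scales, and without the $o(|F_n|)$ boundary estimate the per-tile rate $\log_2|X\res F_k|/|F_k|$ would be swamped by the trivial $\log_2|A|$ contribution from uncovered cells.
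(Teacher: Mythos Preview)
Your proof is correct and is essentially the same argument as the paper's: both fix a block size, hard-wire the finite set $X\res F_k$ (respectively $X\res F_m$) into the machine, tile the large box $F_n$ by translates of the small box, and encode each tile by an index into that fixed set. The only cosmetic difference is that you tile $F_n$ from the inside and handle the leftover boundary verbatim, whereas the paper covers $F_n$ by a slightly larger fully-tiled box $F_{m(k+1)}$ and so avoids a separate boundary term.
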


\begin{proof}
  Fix a positive integer $m$.  Given $n\ge m$, let $k$ be a positive
  integer such that $mk\le n<m(k+1)$.  Partitioning $F_{m(k+1)}$ into
  $(k+1)^d$ blocks of size $|F_m|$, we see that $|X\res
  F_n|\le(k+1)^d|X\res F_m|$ and there is a constant $c$ independent
  of $n$ such that $\K(x\res F_n)\le(k+1)^d\log_2|X\res
  F_m|+2\log_2n+c$ for all $x\in X$.  Thus
  \[
  \frac{\K(x\res F_n)}{|F_n|}\le\frac{(k+1)^d\log_2|X\res
    F_m|+2\log_2n+c}{k^d|F_m|}\to\frac{\log_2|X\res F_m|}{|F_m|}
  \]
  as $n\to\infty$.  Since this holds for all $m$, we now see that
  (\ref{eq:limsup}) follows from (\ref{eq:entX}).
\end{proof}

\begin{lem}
  \label{lem:lim}
  For some $x\in X$ we have
  \begin{equation}
    \label{eq:lim}
    \lim_{n\to\infty}\frac{\K(x\res F_n)}{|F_n|}\,\,=\,\,\ent(X)\,.
  \end{equation}
\end{lem}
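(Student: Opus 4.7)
\textbf{Proof plan for Lemma \ref{lem:lim}.}

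By Lemma \ref{lem:limsup}, every $x \in X$ already satisfies $\limsup_n \K(x\res F_n)/|F_n| \le \ent(X)$, so it suffices to produce a single $x \in X$ with $\liminf_n \K(x\res F_n)/|F_n| \ge \ent(X)$. My plan is to exhibit such an $x$ as a $\mu$-generic point for an ergodic shift-invariant probability measure $\mu$ of maximal entropy on $X$.

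By the Variational Principle (Theorem \ref{thm:var}), fix an ergodic shift-invariant Borel probability measure $\mu$ on $X$ with $\ent(X,\mu) = \ent(X)$, and write $h = \ent(X)$. I will show that $\liminf_n \K(x\res F_n)/|F_n| \ge h$ for $\mu$-almost every $x$; since $\mu(X) = 1$, this certainly produces at least one such $x$, and combined with Lemma \ref{lem:limsup} the full limit (\ref{eq:lim}) follows.

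Fix a rational $\epsilon > 0$. By Theorem \ref{thm:smb} (Shannon--McMillan--Breiman), for $\mu$-a.e. $x$ the inequality $\mu(\llb x\res F_n\rrb) \le 2^{-(h-\epsilon/2)|F_n|}$ holds for all sufficiently large $n$. On the other hand, by a trivial counting of binary descriptions, the set $U_n = \{\sigma \in A^{F_n} : \K(\sigma) \le (h-\epsilon)|F_n|\}$ satisfies $|U_n| \le 2^{(h-\epsilon)|F_n|+1}$. Setting $V_n = \{\sigma \in A^{F_n} : \mu(\llb\sigma\rrb) \le 2^{-(h-\epsilon/2)|F_n|}\}$, I then estimate
\[
\mu\{x \in X : x\res F_n \in U_n \cap V_n\} \,=\, \sum_{\sigma \in U_n \cap V_n} \mu(\llb\sigma\rrb) \,\le\, 2 \cdot 2^{(h-\epsilon)|F_n|} \cdot 2^{-(h-\epsilon/2)|F_n|} \,=\, 2^{1 - \epsilon|F_n|/2}\,.
\]
Since $|F_n|\to\infty$, this is summable in $n$, so by the Borel--Cantelli lemma, for $\mu$-a.e. $x$ the event $x\res F_n \in U_n \cap V_n$ occurs for only finitely many $n$. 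Combined with the Shannon--McMillan--Breiman consequence that $x\res F_n \in V_n$ holds for all large $n$, this forces $x\res F_n \notin U_n$ eventually, i.e.\ $\K(x\res F_n) > (h-\epsilon)|F_n|$ for all sufficiently large $n$. Intersecting the $\mu$-conull sets so obtained over a countable sequence $\epsilon_k \downarrow 0$ yields a $\mu$-conull set on which $\liminf_n \K(x\res F_n)/|F_n| \ge h$.

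The main subtlety---more than a genuine obstacle---is coordinating the two ``almost everywhere'' ingredients (SMB and Borel--Cantelli) with respect to the same measure $\mu$. In particular, because the measure of maximal entropy delivered by the Variational Principle need not be computable, one cannot directly invoke a Levin-style bound of the form $\K(\sigma) \ge -\log_2 \mu(\llb\sigma\rrb) - O(1)$. The argument above circumvents this by using only the uniform, computability-free counting bound $|U_n| \le 2^{(h-\epsilon)|F_n|+1}$ together with the pointwise SMB upper bound on $\mu(\llb x\res F_n\rrb)$.
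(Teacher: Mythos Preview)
Your proof is correct and follows essentially the same route as the paper: choose an ergodic measure of maximal entropy via the Variational Principle, combine the Shannon--McMillan--Breiman theorem with the trivial counting bound on low-complexity strings, and apply Borel--Cantelli to conclude that $\K(x\res F_n)\ge(h-\epsilon)|F_n|$ eventually for $\mu$-a.e.\ $x$. The only cosmetic difference is parameterization (you write $h-\epsilon$ and $h-\epsilon/2$ where the paper writes $s$ and $s+\epsilon$ with $s<\ent(X)$), and your explicit invocation of Lemma~\ref{lem:limsup} to handle the $\limsup$ direction is exactly what the paper leaves implicit.
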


\begin{proof} 
  By the Variational Principle \ref{thm:var} let $\mu$ be an ergodic,
  shift-invariant, probability measure on $X$ such that
  $\ent(X,\mu)=\ent(X)$.  Fix $s<\ent(X)$.  Let
  \[
  D_n=\left\{\xi\in A^{F_n}\bigm|\K(\xi)<|F_n|s\right\}\,.
  \]
  Clearly $|D_n|\le2^{|F_n|s}$.  Fix $\epsilon>0$ such that
  $s+\epsilon<\ent(X)$, and let
  \[
  T_n=\{\xi\in A^{F_n}\mid\mu(\llb\xi\rrb)<2^{-|F_n|(s+\epsilon)}\}\,.
  \]
  The Shannon/McMillan/Breiman Theorem \ref{thm:smb} tell us that for
  $\mu$-almost all $x\in X$ and all sufficiently large $n$ we have
  \[
  \frac{\log_2\mu(\llb x\res F_n\rrb)}{-|F_n|}>s+\epsilon\,,
  \]
  i.e., $x\res F_n\in T_n$, i.e., $x\in\llb T_n\rrb$.  On the other
  hand, for each $n$ we have
  \begin{center}
    $\mu(\llb D_n\rrb\cap\llb T_n\rrb)=\mu(\llb D_n\cap
    T_n\rrb)\le2^{|F_n|s}2^{-|F_n|(s+\epsilon)}=2^{-|F_n|\epsilon}$
  \end{center}
  and so
  \[
  \sum_{n=1}^\infty\mu(\llb D_n\rrb\cap\llb T_n\rrb)<\infty\,.
  \]
  Thus the Borel/Cantelli Lemma tells us that, for $\mu$-almost all
  $x$ and all sufficiently large $n$, $x\notin\llb D_n\rrb\cap\llb
  T_n\rrb$.  But then it follows that, for $\mu$-almost all $x$ and
  all sufficiently large $n$, $x\notin\llb D_n\rrb$, i.e., $x\res
  F_n\notin D_n$, i.e., $\K(x\res F_n)\ge|F_n|s$.  Since this holds
  for all $s<\ent(X)$, we now see that (\ref{eq:lim}) holds for
  $\mu$-almost all $x\in X$.  This completes the proof.
\end{proof}

\begin{thm}
  \label{thm:dim-k}
  Let $G=\NN^d$ or $G=\ZZ^d$ where $d$ is a positive integer.  Let $A$
  be a finite set of symbols, and let $X\subseteq A^G$ be a subshift.
  Then
  \[
  \ent(X)=\dim(X)=\effdim(X)\,.
  \]
  Moreover
  \[
  \dim(X)\ge\limsup_{n\to\infty}\frac{\K(x\res F_n)}{|F_n|}
  \]
  for all $x\in X$, and
  \[
  \dim(X)=\lim_{n\to\infty}\frac{\K(x\res
    F_n)}{|F_n|}
  \]
  for some $x\in X$.
\end{thm}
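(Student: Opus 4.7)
The plan is to assemble ingredients that are already in place: Theorem \ref{thm:G} (entropy $=$ dimension), Mayordomo's Theorem \ref{thm:effdim-k}, and Lemmas \ref{lem:limsup} and \ref{lem:lim}. There is essentially no new combinatorial content to supply; the task is to organize the inequalities in the correct order.

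First I would observe that Theorem \ref{thm:G} already gives $\ent(X) = \dim(X)$, so only $\effdim(X) = \dim(X)$ remains among the three-way equality, and I would handle this by a sandwich. For the inequality $\dim(X) \leq \effdim(X)$, which holds in complete generality in our ultrametric space, I would note that any witnessing double sequence $B_{ij}$ for effective $s$-nullness with $\sum_j \diam(B_{ij})^s \leq 2^{-i}$ forces $\diam(B_{ij}) \leq 2^{-i/s}$, so the diameters tend to zero uniformly as $i \to \infty$, and the same sequence witnesses classical $s$-nullness. For the reverse inequality, Mayordomo's Theorem \ref{thm:effdim-k} gives
\[
\effdim(X) \;=\; \sup_{x \in X} \liminf_{n \to \infty} \frac{\K(x \res F_n)}{|F_n|},
\]
and Lemma \ref{lem:limsup} together with the trivial bound $\liminf \leq \limsup$ shows that each term in the supremum is at most $\ent(X) = \dim(X)$. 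Taking the supremum yields $\effdim(X) \leq \dim(X)$.

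The two ``moreover'' clauses are then mere translations. The inequality $\dim(X) \geq \limsup_n \K(x \res F_n)/|F_n|$ for every $x \in X$ is Lemma \ref{lem:limsup} with $\ent(X)$ replaced by $\dim(X)$ via Theorem \ref{thm:G}, and the existence of some $x \in X$ with $\lim_n \K(x \res F_n)/|F_n| = \dim(X)$ is the analogous reading of Lemma \ref{lem:lim}.

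I do not expect any real obstacle: the hard work has been done in the Vitali-style covering argument of Theorem \ref{thm:G} and in the Shannon/McMillan/Breiman plus Borel/Cantelli argument of Lemma \ref{lem:lim}. The only conceptual point worth flagging is that one cannot simply cite the ``effectively closed implies $\effdim = \dim$'' result mentioned in \S\ref{sec:effdim}, because a general subshift $X \subseteq A^G$ need not be effectively closed; the reduction genuinely needs Lemma \ref{lem:limsup} to pin $\effdim(X)$ from above by $\ent(X)$.
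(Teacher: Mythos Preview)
Your proposal is correct and follows essentially the same route as the paper, which simply cites Theorems~\ref{thm:effdim-k} and~\ref{thm:G} together with Lemmas~\ref{lem:limsup} and~\ref{lem:lim}. Your write-up supplies the connective tissue the paper omits; the only cosmetic difference is that you establish $\dim(X)\le\effdim(X)$ via the trivial ``effective null $\Rightarrow$ null'' observation, whereas one could equally well read off $\effdim(X)\ge\ent(X)$ directly from Lemma~\ref{lem:lim} and Mayordomo's theorem.
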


\begin{proof}
  This follows from Theorems \ref{thm:effdim-k} and \ref{thm:G} and
  Lemmas \ref{lem:limsup} and \ref{lem:lim}.
\end{proof}

\begin{ques}
  {\ }
  \begin{enumerate}
  \item Can we find an ``elementary'' or ``direct'' proof of Lemma
    \ref{lem:lim}?  I.e., a proof which does not use measure-theoretic
    entropy?
  \item Is it possible to generalize Theorems \ref{thm:G} and
    \ref{thm:dim-k} so as to apply to wider classes of groups or
    semigroups?  For example, do Theorems \ref{thm:G} and
    \ref{thm:dim-k} continue to hold if $G$ is an amenable group
    \cite{amenable-survey}?
  \item Is it possible to generalize Theorems \ref{thm:G} and
    \ref{thm:dim-k} so as to apply to scaled entropy and scaled
    Hausdorff dimension?  For example, what about
    \[
    \liminf_{n\to\infty}\,\frac{\K(x\res F_n)}{\sqrt{|F_n|}}\,?
    \]
  \end{enumerate}
\end{ques}

\addcontentsline{toc}{section}{References}

\end{document}